\newtheorem{thm}{Theorem}[section]
\newtheorem{lem}[thm]{Lemma}
\newtheorem{prop}[thm]{Proposition}
\theoremstyle{remark}
\newtheorem{rem}[thm]{Remark}
\def\rZ{{\mathrm Z}}
\def\NN{{\mathbb N}}
\def\GL{{\mathrm {GL}}}
\def\rk{{\mathrm {rk}}}
\def\id{{\mathrm {id}}}
\def\ep{\varepsilon}
\def\leq{\leqslant}
\def\geq{\geqslant}
\def\SL{{\mathrm {SL}}}
\def\PSU{\mathrm{PSU}}
\def\PSL{\mathrm{PSL}}
\def\Sp{{\mathrm {Sp}}}
\def\SU{{\mathrm {SU}}}
\def\CU{{\mathcal U}}
\def\phi{\varphi}
\title[Metric ultraproducts of finite simple groups]{Some geometric properties of metric ultraproducts \\ of finite simple groups}
\author{Andreas Thom}
\address{A.T., Institut f\"ur Geometrie, TU Dresden, Germany}
\email{andreas.thom@tu-dresden.de}
\author{John Wilson}
\address{J.W., Mathematical Institute, Oxford, United Kingdom}
\email{john.wilson@maths.ox.ac.uk}
\begin{document}

\onehalfspace

\begin{abstract}
In this article we prove some previously announced results about metric ultrapro\-ducts of finite simple groups. We show that any non-discrete metric ultraproduct of alternating or special linear groups is a geodesic metric space. For more general non-discrete metric ultraproducts of finite simple groups, we are able to establish path-connectedness. As expected, these global properties reflect asymptotic properties of various families of finite simple groups.
\end{abstract}

\maketitle

\medskip

\section{Introduction}

The study of metric ultraproducts of finite groups that are equipped with natural geometrically or combinatorially defined bi-invariant metrics has attracted some interest recently. First of all, Elek and Szab\'o \cite{elek} established that a discrete group is sofic if and only it can be embedded in a metric ultraproduct of alternating groups. Secondly, the model theory of metric structures (as developed in \cite{MR2436146}) provides a natural setting in which to study asymptotic metric properties of families of metric spaces. Families of finite simple groups serve as interesting examples in this context.
Some results about the metric geometry of such ultraproducts were announced in \cite{MR3210125}. The aim of this article is to give complete proofs of some of these results. We refer for the basic terminology to \cite{MR3210125} and \cite{MR3162821}.

Throughout the paper, $\omega$ is a fixed non-principal ultrafilter on $\NN$.  Let $(G_i,d_i)_{i \in \NN}$ be a sequence of groups with normalized bi-invariant metrics. Then the metric ultraproduct $(G,d):=\prod_{i \to \omega} (G_i,d_i)$ is again a metric space, with metric induced by the pseudometric 
$$d((g_i),(h_i)):= \lim_{i \to \omega} d_i(g_i,h_i)$$
on the abstract ultraproduct $\prod_{i\to\omega}G_i$. The metric ultraproduct is the quotient of the abstract ultraproduct that makes this pseudo-metric a metric; see \cite{MR3162821}.  
It is clear that the metric geometry of $G$ reflects asymptotic features of the behaviour of the metrics $d_i$.
When the groups $G_i$ are finite simple groups, the most natural choices of bi-invariant metrics are asymptotically equivalent \cite{MR3162821} and so the topology on $G$, unlike the metric itself, is independent of the choices. In our context, the alternating groups $A_n$ come equipped with the normalized Hamming distance
$$d(\sigma,\tau) = \frac{\mu_{\sigma^{-1}\tau}}n, \quad \hbox{for all }\sigma,\tau \in A_n,$$
where for each permutation $\sigma$ we write $\mu_\sigma \in \{0,\dots,n\}$ for the number of elements moved by $\sigma$.
The groups of Lie type are usually considered with the normalized rank metric or projective rank metric, coming from the natural embedding into either ${\rm GL}_n(F)$ or ${\rm PGL}_n(F)$ for a field $F$ and integer $n$; the normalized rank metric on ${\rm GL}_n(F)$ is defined by
$$d(x,y) = \frac{\rk(x-y)}n, \quad \hbox{for all }x,y \in {\rm GL}_n(F),$$
and the projective rank metric on ${\rm PGL}_n(F)$ by
$$d(\bar x,\bar y) = \inf_{\lambda \in F^{\times}}\frac{\rk(x-\lambda y)}n, \quad \hbox{for all }x,y \in {\rm PGL}_n(F),$$
where $x,y \in {\rm GL}_n(F)$ denote arbitrary lifts of $\bar x,\bar y \in {\rm PGL}_n(F)$. All of these metrics are equivalent to the more intrinsically defined conjugacy metrics. See \cite{MR3162821} for details.

The following result, which is essentially known, clarifies the implications of the classification of finite simple groups for the structure of metric ultraproducts of finite simple groups.

\begin{thm}\label{connected}  
Let $(G_i)_{i \in \NN}$ be a sequence of $($non-abelian$)$ finite simple groups with bi-invariant metrics and let $G$ be the metric ultraproduct $\prod_{i\to\omega} (G_i,d_i)$.    
If $G$ is infinite then one of the following holds$:$ 
\begin{enumerate}
\item[\rm(i)] $G$ is a Chevalley group $($possibly twisted\/$)$ over an ultraproduct of finite fields  
and the metric on $G$ is discrete$;$ 
\item[\rm(ii)] $G$ is a metric ultraproduct of alternating groups 
of ranks $n_i$ with $n_i\to_\omega\infty$, or
\item[\rm(iii)] a metric ultraproduct of finite simple classical groups 
all of the same type
$($linear, symplectic, unitary or orthogonal\/$)$ over finite fields and of ranks $n_i$ with $n_i\to_\omega\infty$.
\end{enumerate}
\end{thm}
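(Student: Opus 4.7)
My plan is to combine the classification of finite simple groups (CFSG) with repeated pigeonhole arguments along $\omega$.

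By CFSG, any non-abelian finite simple group is either an alternating group $A_n$ (with $n\geq 5$), a finite group of Lie type (possibly twisted), or one of the $26$ sporadic groups. Since $G$ is infinite we must have $|G_i|\to_\omega\infty$, and as there are only finitely many sporadic groups the set of indices $i$ for which $G_i$ is sporadic does not belong to $\omega$. A second pigeonhole lets me assume, after passing to an element of $\omega$, that either every $G_i$ is alternating or every $G_i$ is of Lie type. In the alternating case, writing $G_i=A_{n_i}$, the condition $|A_{n_i}|\to_\omega\infty$ forces $n_i\to_\omega\infty$, yielding case (ii).

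In the Lie type case, let $r_i$ denote the untwisted Lie rank of $G_i$, and split according to the asymptotic behaviour of $r_i$. Since exceptional Lie types have rank at most $8$, if $r_i\to_\omega\infty$ then $\omega$-almost all $G_i$ are classical, and a four-way pigeonhole among the types (linear, symplectic, unitary, orthogonal) produces case (iii). Otherwise there exists $R$ with $\{i:r_i\leq R\}\in\omega$. Since only finitely many Chevalley data (Dynkin diagram together with a possible twist) have rank $\leq R$, a further pigeonhole reduces to the situation in which all $G_i$ arise from one and the same Chevalley construction over finite fields $F_i=\FF_{q_i}$; with the rank fixed, the hypothesis $|G_i|\to_\omega\infty$ forces $q_i\to_\omega\infty$, and a standard first-order transfer (applied to uniform matrix presentations of the $G_i$) identifies the metric ultraproduct with the same Chevalley group over the ultraproduct field $F=\prod_{i\to\omega}F_i$. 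This is case (i).

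To finish case (i) I would verify discreteness of the metric. By the asymptotic equivalence of the natural bi-invariant metrics recorded in \cite{MR3162821}, it suffices to treat the rank (or projective rank) metric, which on $\GL_n(F_i)$ (respectively $\PGL_n(F_i)$) takes values only in $\{0,1/n,2/n,\ldots,1\}$. Since the ambient $n$ is bounded across $\omega$, every $\omega$-limit of such values still lies in a common finite subset of $[0,1]$, so the induced metric on $G$ is discrete. The main technical obstacle is the identification of the ultraproduct with a Chevalley group over $F=\prod_{i\to\omega}F_i$ in case (i); this requires uniform matrix realisations across $i$ and, for the twisted types, careful bookkeeping of the Frobenius-fixed subgroup. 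Once that identification is in place, cases (ii) and (iii) follow essentially by definition from the CFSG partition, and the discreteness in (i) is the elementary observation above.
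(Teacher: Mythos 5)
Your case division is the same as the paper's: CFSG plus repeated selection along $\omega$ reduces to (a) alternating groups of unbounded degree, (b) classical groups of one fixed type and unbounded rank, and (c) a fixed Lie type of bounded (hence constant) rank over fields $\FF_{q_i}$ with $q_i\to_\omega\infty$. The identification in case (c) of the ultraproduct with the same Chevalley group over $\prod_{i\to\omega}F_i$, which you correctly flag as the main technical obstacle, is precisely the theorem of F.~Point cited in the paper \cite{point}; you should cite it rather than appeal to a ``standard first-order transfer'', since for the twisted and adjoint types this is not a bare application of \L{}o\'s's theorem. You also need a word on why the \emph{metric} ultraproduct coincides with the abstract one in case (i) (the paper gets this from simplicity of the Chevalley group, or one can get it from discreteness).

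The genuine gap is in your discreteness argument. The theorem is stated for \emph{arbitrary} bi-invariant metrics $d_i$, not for the rank or projective rank metrics, and the asymptotic equivalence from \cite{MR3162821} concerns only the natural choices of metric; it does not let you replace an arbitrary bi-invariant metric by the rank metric. For a general bi-invariant metric on, say, $\PSL_2(\FF_{q_i})$ the set of values is governed by the (growing) number of conjugacy classes, so the ``finitely many values independent of $i$'' observation fails. The paper's fix is the Liebeck--Shalev width bound \cite[Corollary 1.9]{LS}: there is a constant $c$ such that every nontrivial element of $G_i$ is a product of at most $c$ conjugates of any other nontrivial element. Bi-invariance and the triangle inequality then give, for any fixed $h\neq 1$ in $G$ with $d(1,h)=\kappa>0$, the bound $d(1,g)\geq \kappa/c$ for every $g\neq 1$, which is discreteness for every bi-invariant metric. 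Your elementary computation is correct for the rank metric itself, but it does not prove statement (i) in the stated generality; you need the bounded-width input (or an equivalent) to close the argument.
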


\begin{proof}
With finitely many exceptions  each (non-abelian) finite simple groups is isomorphic to an alternating group or a (possibly twisted) group of Lie type ${}^j{\mathrm X}_n(K)$ for a suitable finite field $K$, 
where ${}^j{\mathrm X}_n$ belongs to the list$$
{\mathrm  A}_n, {\mathrm  B}_n, 
{\mathrm C}_n, {\mathrm D}_n,  
{}^2{\mathrm  A}_n,  
 {}^2{\mathrm  D}_n\eqno(\ast)$$ 
 or to the list $${\mathrm E}_6, {\mathrm  E}_7, {\mathrm E}_8,
{\mathrm F}_4, {\mathrm G}_2,  
{}^2{\mathrm B}_2, {}^3{\mathrm D}_4,  {}^2{\mathrm G}_2, {}^2{\mathrm  E}_6,
{}^2{\mathrm F}_4.$$ 
Since an ultrafilter always selects one from a finite number of possibilities, it follows that $G$ is isomorphic to one of the following: (a) an ultraproduct of isomorphic simple groups, (b) a group as in (ii) in the statement of the theorem, 
(c) an ultraproduct of groups 
${}^jX_n$ with $X, j$ fixed.  The groups in (a) are finite, and if in (c) the integers $n$ are unbounded on each set in $\CU$ then $G$ is as in (iii).  If in case (c) the integers $n$ are bounded on a set in $\CU$, then they can be assumed constant, 
and then a theorem of F.\ Point \cite{point} asserts that $G$ is itself a group ${}^jX_n(F)$ for some field $F$;  
being simple it coincides with the metric ultraproduct.  From \cite[Corollary 1.9]{LS} there is a constant $c$ such that each $G_i$ has the property that for all non-trivial $g_1,g_2\in G_i$ the element $g_2$ is a product of at most $c$ conjugates of $g_1$.  So therefore does $G$. Choose $h\in G\setminus\{1\}$ and let $d(h,1)=\kappa$.  If $g\in G\setminus\{1\}$ then since $h$ is a product of at most $c$ conjugates of $g$ we have $\kappa\leq cd(1,g)$.  Therefore $d(1,g)\geq\kappa/c$ and the result follows.
\end{proof}

\begin{rem} If in (ii), (iii) above the group $G$ is simple then it follows from 
a result of \cite{nik} that the topology on $G$ is induced by the conjugacy metrics on the groups $G_i$, and then from results of \cite{MR3162821} that the topology is induced from the Hamming metrics in case (ii) and from the projective rank metrics in case (iii).
\end{rem}

It follows from more general facts that the non-discrete simple groups in Theorem \ref{connected} are complete; see \cite[p.\ 147]{wantiez}.  Here we shall show that they are path-connected.  We shall deduce this as a consequence of a stronger result.
We recall that a {\em geodesic} from one point $x$ of a metric space $(X,d)$ to another point $y$ is an isometric embedding $p \colon [0,d(x,y)] \to X$ such that $p(0)=x$ and $p(d(x,y))=y$, and that $X$ is called a {\em geodesic space} if all pairs of points are connected by geodesics. A space $X$ is called {\em star-shaped} if it has a point $x_0$ that is connected to each element of $X$ by a geodesic.  

Our main result is the following theorem.

\begin{thm}\label{main}
The non-discrete metric ultraproducts of sequences of alternating groups equipped with the normalized Hamming metrics or projective linear groups equipped with the projective rank metrics are complete geodesic metric spaces.

Moreover, every non-discrete simple metric ultraproduct $G$ of finite simple groups has geodesic subgroups $H_1,H_2,H_3$ and a star-shaped subspace $H_4$ such that $G=H_1H_2H_1H_3H_4$.
\end{thm}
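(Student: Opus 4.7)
The plan is to treat the two assertions of the theorem separately.

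\medskip

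\noindent\textbf{Part I: the geodesic property.}
Completeness of the relevant ultraproducts has been noted above (it follows from \cite{wantiez}). By the standard fact that a complete length metric space is geodesic, it suffices to verify the length-space property: for every pair of elements $x,y$ and every $s\in[0,d(x,y)]$ there exists $z$ with $d(x,z)=s$ and $d(x,z)+d(z,y)=d(x,y)$. Because the ultraproduct metric is itself an ultralimit, it is enough to produce, in each factor group $G_i$, an element realising this condition up to an error of size $o(1)$ as $i\to\omega$.

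For alternating groups, write $x=[(\sigma_i)]$ and $y=[(\tau_i)]$, and let $\pi_i := \sigma_i^{-1}\tau_i$ have disjoint cycle decomposition $c_{i,1}\cdots c_{i,m_i}$ with cycle lengths $\ell_{i,1},\dots,\ell_{i,m_i}$. For any subset $J\subseteq\{1,\dots,m_i\}$, the element $\rho_i := \sigma_i\prod_{j\in J}c_{i,j}$ satisfies the exact identity
$$d_i(\sigma_i,\rho_i)+d_i(\rho_i,\tau_i)=d_i(\sigma_i,\tau_i),$$
with $d_i(\sigma_i,\rho_i)=n_i^{-1}\sum_{j\in J}\ell_{i,j}$, because the supports of $\sigma_i^{-1}\rho_i$ and $\rho_i^{-1}\tau_i$ are disjoint. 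A greedy subset-sum argument then achieves $\bigl|\sum_{j\in J}\ell_{i,j}-s\,n_i\bigr|\leq\max_j\ell_{i,j}$, so the only remaining issue is to arrange that $\max_j\ell_{i,j}/n_i=o(1)$. This is accomplished by a splitting trick: the $\ell$-cycle $(a_1\ldots a_\ell)$ equals $(a_1\ldots a_s)(a_s\ldots a_\ell)$, a product of two cycles overlapping in a single point, so replacing a long cycle by a product of shorter ones costs an overshoot of only $1/n_i$ per split in the triangle inequality. Iterating reduces every cycle length to $o(n_i)$ at vanishing total cost. The parity condition needed to keep $\rho_i$ inside $A_{n_i}$ is handled by transferring a single even-length cycle between $J$ and its complement, which alters the estimate by only $o(1)$ once the splitting has been carried out.

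The projective linear case is formally parallel: the cycle decomposition of $\pi_i$ is replaced by the primary/Jordan decomposition of $x_i-\lambda_i y_i$ for suitable lifts $x_i,y_i$ and a scalar $\lambda_i\in F_i^{\times}$ nearly minimising $\rk(x_i-\lambda y_i)$; the support cardinality is replaced by the rank; and the splitting of a long cycle is replaced by a splitting of a long Jordan block into two shorter ones sharing a single vector.

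\medskip

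\noindent\textbf{Part II: the five-fold decomposition.}
By Theorem~\ref{connected}, any non-discrete simple metric ultraproduct of finite simple groups is either an ultraproduct of alternating groups (case~(ii)) or of classical groups of one fixed type (case~(iii)). In case~(ii), and in the linear subcase of (iii), Part~I already shows that $G$ is itself geodesic, so one may take $H_1:=G$ and $H_2=H_3=H_4=\{1\}$.

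In the remaining symplectic, unitary, and orthogonal subcases the strategy is to exploit a Bruhat-type decomposition that exposes a large general linear subgroup. Each such classical group contains, as the Levi factor of the stabiliser of a maximal totally isotropic (respectively, totally singular) subspace, a subgroup of type $\GL_m$ with $m\approx n/2$; its image in the ultraproduct is geodesic by Part~I, and is taken as $H_1$. The factors $H_2,H_3$ are (ultraproducts of) one-parameter unipotent subgroups attached to the decomposition; when viewed through the Cayley-style parametrisation by symmetric, Hermitian, or skew bilinear forms, the rank metric on these subgroups reduces to the rank metric on a space of such forms, which is geodesic by an argument essentially identical to the one in Part~I (decompose a form by its rank into rank-one summands, then subset-sum). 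The subspace $H_4$ is a star-shaped neighbourhood of a transversal for the relevant Weyl group. The identity $G=H_1H_2H_1H_3H_4$ then reduces to a uniform Bruhat-type factorisation in each factor $G_i$, passed to the ultraproduct.

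The main obstacle is the case-by-case verification, for the symplectic, unitary, and orthogonal types, that the ambient Bruhat-type decomposition is uniform enough to survive passage to the ultraproduct and that each of the chosen subgroups inherits the required geodesic or star-shaped structure; all of this ultimately reduces to the geodesic property of the $\GL$-type Levi subgroup established in Part~I and of its unipotent radicals.
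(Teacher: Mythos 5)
Part I of your proposal is sound and is essentially the paper's argument: the approximate-midpoint criterion is Proposition~\ref{spaces}, and your greedy subset-sum on cycles plus cycle-splitting is the content of Lemma~\ref{splitAn}, with the cyclic-module (Jordan) analogue being Lemma~\ref{splitSLn}. Part II, however, has a genuine gap at its central step. You take $H_1$ to be the image of the $\GL_m$-type Levi factor and assert it is geodesic ``by Part~I''. But Part~I establishes the geodesic property for $\PSL_m$ with the \emph{projective rank metric}, whereas the metric induced on the Levi $\big\{\big(\begin{smallmatrix} a^{-1}&0\\ 0&\bar a^T\end{smallmatrix}\big)\big\}$ by the ambient projective rank metric of $\Sp_{2n}$ or $\SU_{2n}$ is $\tfrac{1}{2n}\min_{\lambda}\big(\rk(a-\lambda^{-1})+\rk(\bar a-\lambda)\big)$: the single scalar $\lambda$ must serve both diagonal blocks simultaneously, so the embedding $\GL_m\hookrightarrow G$ is not isometric for the metrics Part~I controls. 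This is exactly the difficulty the paper confronts: it does \emph{not} show the Levi is geodesic, but instead factors it as $H=EY$ (Lemmas~\ref{ultimatepatch} and \ref{unipotentpath}(b)), where $E$ is a small diagonal correction and $Y$ is merely a \emph{star-shaped subset} on which the minimising scalar is essentially $1$, so that the ordinary rank metric (and hence Lemma~\ref{splitSLn}) can be used. Without such a device your $H_1$ is unjustified, and this is why the theorem's statement allows a star-shaped subspace $H_4$ rather than four geodesic subgroups.

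Two further points. First, the shape of your decomposition is off: with $H_1$ the Levi and $H_2,H_3$ unipotent radicals, $H_1H_2H_1H_3=LU^+U^-$ (since $L$ normalises $U^{\pm}$), which is essentially only the big cell; the paper's Gaussian-elimination argument (Lemmas~\ref{symplecticproduct} and \ref{unitaryproduct}) shows one needs \emph{three} unipotent factors, $G=U^TUU^T\cdot H$, and the Weyl group never appears --- your proposed $H_4$, a ``star-shaped neighbourhood of a transversal for the Weyl group'', is not star-shaped in any evident sense (distinct Weyl representatives sit at rank distance bounded away from zero) and is not what is needed. Second, you do not address the reduction of odd-dimensional unitary groups and of the types $\Omega^-_{2n}$, $\Omega_{2n+1}$ to the split even case, where the Witt index is too small for the maximal isotropic subspace to give $m\approx n/2$ directly; the paper needs Proposition~\ref{exhaust} and Lemma~\ref{enough} (a Witt's-Lemma approximation argument) to show these ultraproducts are isometric to ultraproducts of groups ${\rm P}\Omega^+_{2m}$ and $\PSU_{2m}$.
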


If follows that the groups in the first part above are path-connected.  
Let $G$ and the subspaces $H_i$ be as in the second part.  Given $x,y\in G$ we write
$x^{-1}y=h_1\dots h_5$ with factors $h_j$ in the appropriate subsets $H_i$.  Concatenating translates of geodesics from $1$ to the elements $h_j$ we obtain a path from $x$ to $y$. In particular, all non-discrete metric ultraproducts of  finite simple groups are path-connected.

\section{Metric ultraproducts of groups $A_n$ and $\PSL_n$} 
To prove that certain groups are geodesic metric spaces we use the following known result. For the reader's convenience we include a proof.

\begin{prop}\label{spaces} Let $(X_i,d_i)$ be a sequence of metric spaces, and $(\ep_i)$ a sequence of positive reals with $\ep_i\to_\omega0$.  
Suppose that for each $i$, and for all $x_i$, $y_i$ in $X_i$ there exists $z_i\in X_i$ with $d_i(x_i,z_i)\leq\frac12d_i(x_i,y_i)+\ep_i$ and $d_i(z_i,y_i)\leq\frac12d_i(x_i,y_i)+\ep_i$.  Then the metric ultraproduct $X=\prod_{i \to \omega}(X_i,d_i)$ is a geodesic space. \end{prop}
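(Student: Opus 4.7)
The plan is to reduce the statement to the classical fact that a \emph{complete} metric space with approximate midpoints (indeed, with exact midpoints) is geodesic, and to verify both the completeness and the midpoint property directly from the ultraproduct construction.

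First I would verify that $(X,d)$ is complete. This is a standard fact about metric ultraproducts: given a Cauchy sequence $(x^{(n)})$ in $X$, pass to a subsequence with $d(x^{(n)},x^{(n+1)})<2^{-n}$, lift each $x^{(n)}$ to a representative $(x_i^{(n)})_{i\in\NN}$, and use a diagonal argument along $\omega$ to select representatives $(z_i)_{i\in\NN}$ of a candidate limit $z\in X$; one then checks $x^{(n)}\to z$. Since this is routine and essentially contained in \cite{MR3162821}, I would simply cite it and move on.

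Next I would establish that $X$ has \emph{exact} midpoints. Given $x=[(x_i)]$ and $y=[(y_i)]$, apply the hypothesis pointwise in $i$ to obtain $z_i\in X_i$ with $d_i(x_i,z_i),\,d_i(z_i,y_i)\leq \tfrac12 d_i(x_i,y_i)+\ep_i$. Set $z:=[(z_i)]$. Taking $\omega$-limits and using $\ep_i\to_\omega 0$, one gets $d(x,z),\,d(z,y)\leq\tfrac12 d(x,y)$; combined with the triangle inequality $d(x,y)\leq d(x,z)+d(z,y)$, both inequalities are forced to be equalities, so $z$ is a true midpoint of $x$ and $y$.

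Finally, I would build a geodesic from $x$ to $y$ by dyadic bisection. Set $L:=d(x,y)$ and define $p\colon D\to X$ on the dyadic rationals $D=\{kL/2^n:0\leq k\leq 2^n,\,n\geq 0\}$ in $[0,L]$ inductively: put $p(0)=x$, $p(L)=y$, and at each stage define $p$ at a new dyadic point as the midpoint (from the previous step) of $p$ at the two neighbouring already-defined points. By induction on $n$, one shows $d(p(s),p(t))=|s-t|$ whenever $s,t\in D$ have denominator dividing $2^n$ (the midpoint property forces all distances to behave additively along the dyadic partition). In particular $p$ is $1$-Lipschitz on the dense subset $D$, so by completeness of $X$ it extends uniquely to an isometric embedding $p\colon[0,L]\to X$ with $p(0)=x$ and $p(L)=y$.

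The only real subtlety is the inductive verification of isometry on the dyadic scaffold: one must rule out that, say, the midpoint chosen for the pair $(p(kL/2^n),p((k+1)L/2^n))$ fails to lie at the correct distance from distant points $p(jL/2^n)$. This follows from the triangle inequality once the previous level's distances are known to be exact, so it is not truly an obstacle; the argument is the standard Menger-type construction. Completeness and the exact midpoint property—both clean consequences of the ultraproduct setup and the $\ep_i\to_\omega 0$ hypothesis—are what make this work.
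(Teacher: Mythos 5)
Your proof is correct, and it follows the same basic strategy as the paper (dyadic bisection followed by extension via completeness), but with a reorganization that is worth noting. The paper carries out the bisection at the level of representatives: it chooses approximate midpoints $p_i(\lambda\ell)\in X_i$ recursively and proves by induction the quantitative estimate $\bigl|d_i(p_i(\lambda\ell),p_i(\lambda'\ell))-|\lambda-\lambda'|\ell\bigr|\leq(k+k')\ep_i$, passing to the ultraproduct only at the end, where the accumulated error $(k+k')\ep_i$ dies because each dyadic point involves only finitely many subdivision steps. You instead pass to the ultraproduct immediately: applying the hypothesis coordinatewise and taking $\omega$-limits kills the $\ep_i$ at once and, combined with the triangle inequality, yields \emph{exact} midpoints in $X$; the rest is then the classical Menger-type argument that a complete metric space with midpoints is geodesic. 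Your route buys a cleaner induction (no error bookkeeping, since the chain inequality $L\leq s+d(p(s),p(t))+(L-t)$ forces all dyadic distances to be exact), at the cost of invoking completeness of the ultraproduct twice, once as a cited standard fact and once for the final extension --- but the paper also assumes completeness without proof, so this is not a gap. Both arguments are complete and correct.
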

\begin{proof}  
Let $x,y$ be elements of $X$, represented by sequences $(x_i)_{i \in \NN}$ and $(y_i)_{i \in \NN}$. 
Let $d(x,y)=\ell$ and let $D$  be the set of dyadic rationals in the closed unit interval $[0,1]$.  We define $p(\lambda\ell)$ for all $\lambda\in D$ recursively as follows.  
 Set $p_i(0)=x_i$ and $p_i(\ell)=y_i$.  If $\lambda$ is of form $2^{-k}m$ with
 $0<k\in\NN$ and $m$ an odd integer, define $p_i(\lambda\ell)$ to be a point $z_i\in X_i$ satisfying
 $$d_i(p_i((\lambda+j 2^{-k})\ell), z_i)\leq \frac12
d_i(p_i((\lambda -2^{-k})\ell), p_i((\lambda +2^{-k})\ell)+\ep_i\quad\hbox{for } j=\pm1.$$

An easy induction on $k+k'$ shows that if $\lambda=2^{-k}m$ and $\lambda=2^{-k'}m'$ with $k,k'\geq0$ and $m,m'$ odd then 
$$\left|d_i(p_i(\lambda),p_i(\lambda')) - |\lambda-\lambda'| \right| \leq (k+k') \varepsilon_i.$$
Therefore the sequence $(p_i)_{i \in \mathbb N}$ induces an isometry $p$ from $\{\lambda\ell\mid \lambda\in D\}$ to
$X$ with $p(0)=x$ and $p(\ell)=y$.
Since the metric ultraproduct is complete, the map $p$ admits a unique extension to an isometric embedding from $[0,\ell]$ to $X$, and this shows that the metric ultraproduct is a geodesic space.
\end{proof}

\begin{lem}\label{splitAn}   Let $t\in[0,1]$ and let
$g$ be an even permutation.
Then there are even permutations $h, k$ with $g=hk$ such that  $|\mu_h-t\mu_g|\leq\frac32$ and $|\mu_k-(1-t)\mu_g|\leq\frac32$.
\end{lem}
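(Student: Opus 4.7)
The plan is to exploit the cycle decomposition $g = c_1 c_2 \cdots c_r$ into disjoint cycles of lengths $\ell_j \geq 2$ (so $\mu_g = \sum_j \ell_j$) together with the splitting identity
\begin{equation*}
(a_1 a_2 \cdots a_\ell) = (a_1 a_2 \cdots a_m)(a_m a_{m+1} \cdots a_\ell), \qquad 2 \leq m \leq \ell - 1,
\end{equation*}
which, for $\ell \geq 3$, writes an $\ell$-cycle as a product of an $m$-cycle and an $(\ell - m + 1)$-cycle overlapping in the single point $a_m$. Since disjoint cycles commute we may reorder the $c_j$ freely. The candidate decomposition is
\begin{equation*}
h := c_1 \cdots c_{j-1} \cdot h', \qquad k := k' \cdot c_{j+1} \cdots c_r,
\end{equation*}
where $(h', k')$ is either one of the trivial factorisations $(1, c_j)$, $(c_j, 1)$ or the cycle-split above; then $g = hk$, and $\mu_h + \mu_k \in \{\mu_g, \mu_g + 1\}$ accordingly.

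Set $\tau = t\mu_g$, $s_j = \ell_1 + \cdots + \ell_j$, $j^* = \min\{j : s_j \geq \tau\}$, and $\alpha = \tau - s_{j^*-1} \in (0, \ell_{j^*}]$. With $\mu_h = s_{j^*-1} + m$, where $m \in \{0, \ell_{j^*}\}$ in the no-split case and $m \in \{2, \ldots, \ell_{j^*} - 1\}$ in the split case, the requirement that both $|\mu_h - \tau|$ and $|\mu_k - (1-t)\mu_g|$ are at most $3/2$ translates (using $\mu_h + \mu_k - \mu_g \in \{0, 1\}$) into
\begin{equation*}
m \in [\alpha - 3/2, \alpha + 3/2] \text{ (no-split)}, \qquad m \in [\alpha - 1/2, \alpha + 3/2] \text{ (split)}.
\end{equation*}
These windows together cover every $\alpha \in [0, \ell_{j^*}]$: take $m = 0$ whenever $\alpha \leq 3/2$, $m = \ell_{j^*}$ whenever $\alpha \geq \ell_{j^*} - 3/2$, and for $\alpha$ in the remaining middle range (which forces $\ell_{j^*} \geq 4$) the split window, of length $2$, meets $\{2, \ldots, \ell_{j^*} - 1\}$ in at least two consecutive integers.

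The harder issue is securing the parity of $h$; then $k$ is automatically even because $g$ is. The parity of $h$ equals $\sum_{i < j^*}(\ell_i - 1) + (m - 1) \pmod 2$. I would first reorder the cycles so that the $r'$ cycles of odd length precede the $r - r'$ cycles of even length (the latter number being even since $g$ is even), grouping the even-length cycles in consecutive pairs. For $j^* \leq r'$ the partial sum $\sum_{i < j^*}(\ell_i - 1)$ vanishes modulo $2$, and a split window of length $2$ always contains integers of both parities inside $\{2, \ldots, \ell_{j^*} - 1\}$, so an $m$ of the correct parity is available. The delicate subcases are when $j^* > r'$ (partial parities oscillate through the even-length block) and when $\ell_{j^*} \in \{2, 3\}$ (restricting the split window); these I handle by treating adjacent pairs of even-length cycles as a single even unit admitting richer splittings such as $(a_1 a_2)(b_1 b_2) = (a_1 a_2 b_1)(a_2 b_1 b_2)$, or --- when $\mu_g \leq 3$ --- by the trivial choice $h \in \{1, g\}$, one of which satisfies $|\mu_h - \tau| \leq \mu_g/2 \leq 3/2$. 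The chief technical difficulty is the bookkeeping needed to verify that parity can always be corrected within the budget of $3/2$ in each of these corner cases.
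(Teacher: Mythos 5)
Your basic tool---writing an $\ell$-cycle as a product of an $m$-cycle and an $(\ell-m+1)$-cycle overlapping in one point---is the same as the paper's, and your window analysis for the generic ``middle range'' is correct. But the proof is not complete: you explicitly defer the parity bookkeeping in the corner cases, and that is exactly where the content of the lemma lies; moreover the difficulty there is not mere bookkeeping. Take $g=(1\,2)(3\,4\,5\,6)$ and $t\mu_g=2.3$, so in your notation $j^*=2$, $\alpha=0.3$, and $c_{j^*}$ is the second member of a pair of even-length cycles. The no-split choice $m=0$ gives $h=(1\,2)$, which is odd, and the split window $[\alpha-\tfrac12,\alpha+\tfrac32]=[-0.2,1.8]$ contains no admissible $m\geq2$; so your generic scheme produces nothing. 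A valid factorization here is $h=(3\,4\,5)$, $k=(1\,2)(5\,6)$, which requires moving the whole cycle $(1\,2)$ from the $h$-side of your boundary to the $k$-side---a move outside the scheme $h=c_1\cdots c_{j^*-1}h'$. Your proposed remedy, treating a pair of even-length cycles as a single unit ``admitting richer splittings'', would cover this, but that remedy is essentially the lemma itself for a product of two even-length cycles (for every target $\beta$ one must exhibit an even/even factorization of the pair within $\tfrac32$ of $\beta$), and you prove none of it beyond the single identity for two transpositions. A similar unresolved corner occurs at the transition $j^*=r'+1$ with $\alpha$ close to $\ell_{j^*}$, where $m=\ell_{j^*}$ makes $h$ odd while $m=\ell_{j^*}-1$ violates the $k$-side bound.

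For comparison, the paper avoids this case analysis by a greedy device: it takes $h_1$ to be a maximal \emph{even} product of cycles of $g$ with $\mu_{h_1}\leq t\mu_g$, then $k_1$ a maximal even product of the remaining cycles with $\mu_{k_1}\leq(1-t)\mu_g$. Maximality forces the residue $d=k_1^{-1}h_1^{-1}g$ to be a single ``even unit'' (an odd-length cycle, or a product of two even-length cycles) with $\mu_d$ exceeding both deficits $r_1,r_2$; a single splitting of $d$, with the splitting length chosen of prescribed parity inside an interval of length $2$, then finishes. That one maximality argument does the work your corner cases would require by hand; to salvage your route you would at minimum need to prove the pair-splitting sub-lemma in full.
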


\begin{proof} 
It is well known that for each cycle $\pi$ and integer
$l$ with $1<l<\mu_\pi$ we can write
$\pi=\lambda\rho$ with $\lambda$, $\rho$ of lengths $l$, $\mu_\pi-l+1$.   

Consider the disjoint cycle decomposition of $g$; let $h_1$ be a product
of as many of the cycles as possible subject to being even and satisfying $\mu_{h_1}\leq t\mu_g$, and
$k_1$  a product of as many cycles of $h_1^{-1}g$ as possible with $k_1$ even and $\mu_{k_1}\leq (1-t)\mu_g$.  We may suppose that $g\neq h_1k_1$ since otherwise the conclusion is clear. Set $r_1=
t\mu_g-\mu_{h_1}$ and $r_2=(1-t)\mu_g-\mu_{k_1}$.
If $d$ is a cycle of odd length or a product of two cycles of even length in the cycle decomposition of $k_1^{-1}h_1^{-1}g$ then $\mu_d> \max(r_1,r_2)$. Hence $k_1^{-1}h_1^{-1}g=d$ and $\mu_d=\mu_g-\mu_{h_1}-\mu_{k_1}=r_1+r_2$.  

First suppose that $d$ is a cycle.  If $r_1\leq \frac32$ take $h=h_1$; if $r_1>\frac32$ and $r_2\leq \frac32$ take $h=h_1d$.  Otherwise $\mu_d\geq5$.  Let $l$ be the odd integer satisfying $r_1-\frac12\leq l<r_1+\frac32$.  Then $3\leq l\leq \mu_d-2$
and $r_2-\frac12< \mu_d-l+1\leq r_2+\frac32$.
Write $d$ as a product of cycles $h_2$, $k_2$ of lengths $l$ and $\mu_d-l+1$. It follows easily that the even permutations $h=h_1h_2$ and $k=k_2k_1$ have the required property. 

The procedure if $d$ is a product of two cycles of even length is similar; one of the cycles of $d$ has length $l\leq\max(t\mu_g-\mu_{h_1},(1-t)\mu_g-\mu_{k_1})$ and we take it as one of the cycles of $h_2$ or $k_2$.  The other cycle of length $\mu_d-l$ can be written as a product of two cycles one of which has odd length in any prescribed interval of length $2$ in $[0,\mu_d-l]$, and the result follows. 
\end{proof}

\begin{lem}\label{splitSLn} 
Let $g$ be an element of $\GL_n(F)$ where $F$ is a field. Suppose that $g-\lambda 1$
has rank $r$ where $\lambda\neq0$, and let $\phi_1,\phi_2$ be non-negative real numbers with sum $r$.  
Then \begin{enumerate} \item[\rm(a)] there exist elements $h,k\in\GL_n(F)$ with $g=hk$ such that
$$|\rk(h-\lambda1)-\phi_1|\leq2\quad\hbox{and}\quad |\rk(k-1)-\phi_2|\leq2.\eqno(\ast)$$
\item[\rm(b)]If in addition $g\in\SL_n(F)$ then there exist $h,k\in\SL_n(F)$ with $g=hk$ such that
$$|\rk(h-\lambda1)-\phi_1|\leq3\quad\hbox{and}\quad |\rk(k-1)-\phi_2|\leq3.$$
\end{enumerate}\end{lem}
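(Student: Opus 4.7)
The plan is to split $g$ using a decomposition of the ambient space $V = F^n$ adapted to $V_0 := \ker(g - \lambda 1)$, which has dimension $n - r$ and on which $g$ acts as $\lambda \id$. As a first step I round the real targets to integers: choose non-negative integers $a, b$ with $a + b = r$ and $|a - \phi_1|, |b - \phi_2| \leq 1$ (for example, $a = \lfloor \phi_1 \rfloor$). I will construct $h, k$ with $\rk(h - \lambda 1) = a$ and $\rk(k - 1) = b$ exactly, so the bounds in (a) come from rounding alone.

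Fix a complement $U$ of $V_0$ in $V$, giving the block form
$$
g = \begin{pmatrix} \lambda 1 & A \\ 0 & B \end{pmatrix},
$$
where $B$ is an invertible operator on $U$. The heart of the argument is a further decomposition $U = U_1 \oplus U_2$ with $\dim U_1 = a$, chosen so that $U_2$ is a common complement of $U_1$ and $B(U_1)$ in $U$. Such a decomposition exists over any field: two subspaces of equal dimension in a vector space always admit a common complement (a classical fact proved by taking a complement of $V_1 + V_2$ and attaching the graph of an isomorphism between chosen complements of $V_1 \cap V_2$ in each). Using this, define $h$ to act as $\lambda \id$ on $V_0 \oplus U_2$ and to coincide with $g$ on $U_1$, extended linearly to all of $V$. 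The common-complement condition forces $h(V) = V_0 + B(U_1) + U_2 = V$, so $h \in \GL_n(F)$.

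Set $k := h^{-1} g$. Since $h$ and $g$ agree on $V_0 \oplus U_1$, the map $k$ restricts to the identity on that $(n - b)$-dimensional subspace, giving $\rk(k - 1) \leq b$. A direct inspection of $h - \lambda 1$ in the adapted basis (only the $a$ columns corresponding to $U_1$ are nonzero) gives $\rk(h - \lambda 1) \leq a$. The identity $g - \lambda 1 = (h - \lambda 1) k + \lambda(k - 1)$ then yields
$$
r = \rk(g - \lambda 1) \leq \rk(h - \lambda 1) + \rk(k - 1) \leq a + b = r,
$$
so both upper bounds are equalities. This proves part (a).

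For part (b) the above $h, k$ need not lie in $\SL_n$, but $\det h \cdot \det k = 1$. Write $\mu = \det h$ and let $M := \mathrm{diag}(\mu^{-1}, 1, \ldots, 1)$ in the adapted basis; then $\rk(M - 1) \leq 1$ and $\det M = \mu^{-1}$, so $h' := hM$ and $k' := M^{-1} k$ lie in $\SL_n(F)$ with $h' k' = g$. The identities $h' - \lambda 1 = (h - \lambda 1) M + \lambda(M - 1)$ and $k' - 1 = M^{-1}(k - 1) - M^{-1}(M - 1)$ each change the relevant rank by at most $1$; the same subadditivity argument applied to $g = h' k'$ then keeps $\rk(h' - \lambda 1)$ and $\rk(k' - 1)$ within $1$ of $a$ and $b$ respectively. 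Combined with the rounding error this yields the bound $\leq 3$ required in (b). The main obstacle throughout is keeping $h$ invertible while controlling the ranks on both sides; this is resolved cleanly by the common-complement lemma, and everything else is bookkeeping.
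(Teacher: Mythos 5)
Your proof is correct, and for part (a) it takes a genuinely different route from the paper's. The paper reduces to $\lambda=1$ and then argues by induction on $n$, first treating the case where $V$ is a cyclic $F\langle g\rangle$-module (where an explicit factorization is written down in the basis $e_{i+1}=e_1(g-1)^i$) and then passing to the general case via the decomposition of $V$ into cyclic summands. You instead work directly with the eigenspace $V_0=\ker(g-\lambda 1)$, pick $U_1$ of dimension $a=\lfloor\phi_1\rfloor$ inside a complement $U$, and use the common-complement lemma for $U_1$ and $B(U_1)$ to keep $h$ invertible; the identity $g-\lambda 1=(h-\lambda 1)k+\lambda(k-1)$ then forces $\rk(h-\lambda1)=a$ and $\rk(k-1)=b$ exactly. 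All the individual steps check out: $h$ is surjective because $V_0+U_2+g(U_1)=V_0+U_2+B(U_1)=V$, and $k$ fixes $V_0\oplus U_1$ pointwise. Your argument is non-inductive, avoids the cyclic decomposition theorem, and actually yields sharper constants ($1$ in place of $2$ in part (a), hence $2$ in place of $3$ in part (b)), which is more than enough for the application in Theorem \ref{main}. For part (b) your determinant correction by a diagonal matrix of the form $\mathrm{diag}(\mu^{-1},1,\dots,1)$ is essentially identical to the paper's.
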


\begin{proof} (a) The element $g_1=\lambda^{-1}g$ satisfies the hypothesis with $\lambda=1$.  If $g_1=h_1k$ and  
$h_1-1$ and $k-1$ have ranks satisfying the equivalents for $g_1,h_1,k$ of the inequalities in $(\ast)$ then $h:=\lambda h_1$ and $k$ have the correct property.  Therefore we can suppose that $\lambda=1$.
We argue by induction on $n$.  Since result is clear for $n=1$ we assume that $n\geq2$.

First suppose that the natural module $V$ for $\GL_n(F)$ is a cyclic (right) $F\langle g\rangle$-module.
Let $e_1$ be a module generator and define $e_{i+1}=e_1(g-1)^i$ for $i=1,\dots,n-1$. Thus $e_1,\dots, e_n$ is a basis for $V$,
and $r=\rk(g-1)$ is $n$ or $n-1$.  Write $s=\lceil \phi_1\rceil$.  If $s=n$ we take $h=g$, $k=1$; if $s=0$ we take $h=1, k=g$.
Otherwise,  let $h$ be the invertible map
defined by $e_i\mapsto e_i+e_{i+1}$ for $i\leq  s$, $e_{s+1}\mapsto e_1$ and $e_i\mapsto e_i$ for $i>s+1$.  So $\rk(h-1)=s+1$.  Moreover $k:=h^{-1}g$ fixes $e_2,\dots, e_{s+1}$, and so $\rk(k-1)\leq n-s\leq n-\phi_1\leq 1+\phi_2$.   Finally, since $g-1=h(k-1)+h-1$ we have
$$\rk(k-1)\geq\rk(g-1)-\rk(h-1)\geq r-s-1\geq r-\phi_1-2=\phi_2-2.$$
The inequalities in $(\ast)$ follow. 

Since in general $V$ is a direct sum of cyclic $F\langle g\rangle$-modules, we can now assume that
$V$ is a direct sum  $U\oplus W$ of non-zero $F\langle g\rangle$-submodules.  
First suppose that $r_1=\rk(g-1)|_U$ satisfies $r_1\leq\phi_1$.
Then $\rk(g-1)|_W= r-r_1$,
and by induction applied to $W$ and the numbers $\phi_1-r_1$, $\phi_2$ we can find
$h_2,k_2\in\GL(W)$ with $g|_W=h_2k_2$ and 
$$|\rk(h_2-1)-(\phi_1-r_1)|\leq1\quad\hbox{and}\quad |\rk(k_2-1)-\phi_2|\leq1.$$
Define $h,k$ to be the maps acting as $h_2$, $k_2$ on $W$ and acting respectively as $g$ and the identity on $U$. It is clear that $h,k$ have the required properties.

Similarly the result follows if $r_2=\rk(g-1)|_W$ satisfies $r_2\leq \phi_2$.  Since $r_1+r_2=r=\phi_1+\phi_2$ one of the inequalities $r_1\leq\phi_1$, $r_2\leq \phi_2$ must
hold, and (a) follows.

(b) Let $g\in\SL_n(F)$.  By (a) we can write $g=h'k'$ with 
$$|\rk(h'-\lambda1)-\phi_1|\leq2\quad\hbox{and}\quad |\rk(k'-1)-\phi_2|\leq2.$$
Let $d$ be the diagonal matrix with first diagonal entry $\det k'$ and remaining diagonal entries $1$,
and define $h=h'd$, $k=d^{-1}k'$.  Then $h,k\in\SL_n(F)$ and $g=hk$.  Since
$h-\lambda1=(h'-\lambda1)d+\lambda(d-1)$ we have
$|\rk(h-\lambda1)-\rk(h'-\lambda1) |\leq 1$.  Similarly $|\rk(k-1)-\rk(k'-1) |\leq 1$.   The assertion now follows. 
\end{proof} 

\begin{proof}[Proof of Theorem {\rm \ref{main}}]
First let $G=\prod_{i \to \omega}(A_{n_i},d_i)$ be a metric ultraproduct of groups $A_{n_i}$ with the Hamming metric, and with $n_i\to_\omega\infty$.  Define $\ep_i=3/{2n_i}$ for each $i$; thus $\ep_i\to_\omega0$.  Given $x_i,y_i\in A_{n_i}$, set $g=x_i^{-1}y_i$ and in the notation of Lemma \ref{splitAn} find $h,k\in A_{n_i}$ with $g=hk$, and
$$\mu_h\leq\textstyle\frac12 \mu_g+\frac32,\quad \mu_k\leq\frac12 \mu_g+\frac32.$$
Thus $d_i(1,h)\leq\frac12d_i(1,g)+\ep_i$, $d_i(1,k)\leq\frac12d_i(1,g)+\ep_i$.
Hence $d_i(x_i,x_ih)\leq\frac12d_i(x_i,y_i)+\ep_i$, and
$$\textstyle d_i(x_ih,y_i)=d_i(x_ig,y_ik)=d_i(y_i,y_ik)=d_i(1,k)\leq \frac12d_i(x_i,y_i)+\ep_i.$$  Therefore the assertion of (a) for alternating groups follows from Proposition \ref{spaces}.

The argument for ultraproducts $G=\prod_{i \to \omega} (\PSL_{n_i}(F_i),d_i)$ of (not necessarily finite) groups of type $\PSL$ with the rank metric is similar.  Define $\ep_i=3/n_i$.  Let $\bar x_i,\bar y_i\in\PSL_{n_i}(F_i)$ and let $x_i,y_i$ be preimages in $\SL_{n_i}(F_i)$.  Set $g=x_i^{-1}y_i$, choose $\lambda\in F\setminus\{0\}$ with $r:=\rk(g-\lambda1)$ as small as possible, and using Lemma \ref{splitSLn} find $h,k\in \SL_{n_i}(F_i)$ with $\rk(h-\lambda1)\leq\frac12r+3$, $\rk(k-1)\leq\frac12r+3$.  Let $\bar h_i$ be the image of $h$ in $\PSL_{n_i}(F_i)$. Then $d_i(\bar x_i,\bar y_i)=r$ and
$$\textstyle d_i(\bar x_i,\bar x_i\bar h_i)\leq\frac12r+\ep_i,\quad  d_i(\bar x_i\bar h,\bar y_i)\leq\frac12r+\ep_i.$$ Hence the result follows from Proposition \ref{spaces}.  \end{proof}

\section{Geodesic and star-shaped subspaces}
Our treatment of metric ultraproducts of the other classical groups depends basically on the fact that these groups have a BN-pair.  
As preparation, we identify some geodesic subgroups of metric ultraproducts $\prod_{i \to \omega} (\PSL_{n_i}(F_i),d_i)$.  For the rest of the paper (with a brief exception in the proof of Lemma \ref{unipotentpath} (b)), all metrics are defined in terms of projective rank and they are denoted simply by $d$.  For the classical groups under consideration, we do not require the fields to be finite. We begin with a technical calculation.  

In the next two lemmas, for $n\geq2$ and an element $\lambda$ of a field $F$, we write $m_{\lambda,n}$ for the diagonal matrix with first diagonal entry $\lambda^{-(n-1)}$ 
and remaining diagonal entries $\lambda$.

\begin{lem}\label{ultimatepatch} Let $n\geq2$, and $F$ be a field. Let $E$ be the subgroup $\{m_{\lambda,n}\mid\lambda\neq0\}$ of $\SL_n(F)$. Let $X$ be the set of matrices $a\in\GL_n(F)$ such that
$$\dim\ker(a-\lambda 1_n)\leq \dim\ker (a-1_n)+2\quad\hbox{for all }\lambda\neq0.\eqno(\ast\!\ast\!\ast)$$
Then $\GL_n(F)=EX$.
\end{lem}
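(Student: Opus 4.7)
The plan is as follows. Given any $a\in\GL_n(F)$, the goal is to find $\lambda\in F^\times$ such that $b:=m_{\lambda,n}^{-1}a$ lies in $X$, so that $a=m_{\lambda,n}\,b\in EX$. Since $m_{\lambda,n}^{-1}$ is invertible,
$$\dim\ker(b-\mu 1_n)=\dim\ker(a-\mu m_{\lambda,n})$$
for every $\mu\in F^\times$, and the condition $b\in X$ translates into
$$\dim\ker(a-\mu m_{\lambda,n})\leq \dim\ker(a-m_{\lambda,n})+2\qquad\text{for all } \mu\in F^\times. \quad(\dagger)$$

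The key observation is that $m_{\lambda,n}$ differs from the scalar matrix $\lambda 1_n$ by a matrix of rank at most one, namely the matrix whose only nonzero entry is $\lambda^{-(n-1)}-\lambda$ in position $(1,1)$; scaling preserves this, so $\mu m_{\lambda,n}-\mu\lambda 1_n$ has rank at most one for every $\mu\in F^\times$. Writing $g(\beta):=\dim\ker(a-\beta 1_n)$ and using the standard fact that a rank-one perturbation changes kernel dimension by at most one, I obtain
$$\bigl|\dim\ker(a-\mu m_{\lambda,n})-g(\mu\lambda)\bigr|\leq 1\qquad\text{for all } \mu,\lambda\in F^\times.$$

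The right choice of $\lambda$ is now dictated by the shape of $(\dagger)$. Let $M:=\max_{\beta\in F^\times}g(\beta)$ and pick $\lambda\in F^\times$ with $g(\lambda)=M$: when $M>0$, take $\lambda$ to be an eigenvalue of $a$ in $F^\times$ of maximal geometric multiplicity (such an eigenvalue exists in $F^\times$ since $a$ is invertible); when $M=0$, simply take $\lambda=1$. Specialising the observation at $\mu=1$ gives $\dim\ker(a-m_{\lambda,n})\geq g(\lambda)-1=M-1$, while for general $\mu$ the observation together with the maximality of $M$ yields $\dim\ker(a-\mu m_{\lambda,n})\leq g(\mu\lambda)+1\leq M+1$. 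Combining these two bounds verifies $(\dagger)$, with no slack to spare.

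I do not anticipate a serious obstacle: the argument is essentially bookkeeping of rank-one perturbations. The only real content is the choice of $\lambda$, and the cleverness is already built into the definition of $m_{\lambda,n}$, which is engineered to lie in $\SL_n(F)$ while still differing from $\lambda 1_n$ by a matrix of rank one; the constant $2$ in the definition of $X$ is precisely what is needed to absorb the two rank-one discrepancies at $\mu=1$ and at a general $\mu$ simultaneously.
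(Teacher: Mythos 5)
Your proposal is correct and follows essentially the same route as the paper: you choose $\lambda$ maximizing $\dim\ker(a-\lambda 1_n)$, form $m_{\lambda,n}^{-1}a=m_{\lambda^{-1},n}a$ (the paper's $x$), and control the kernel dimensions via the observation that $m_{\lambda,n}$ is a rank-one perturbation of $\lambda 1_n$, losing at most $1$ at $\mu=1$ and gaining at most $1$ at general $\mu$. The bookkeeping is sound, so nothing further is needed.
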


\begin{proof} Let $a\in\GL_n(F)$, and choose $\lambda\neq0$ with $s=\dim\ker (a-\lambda1_n)$ as large as possible. Set $x=m_{\lambda^{-1},n}a$.  Since  $(m_{\lambda^{-1},n}-\lambda^{-1}1_n)a$ has rank at most $1$, we have
$$\dim\ker(x-1_n)=\dim\ker((m_{\lambda^{-1},n}-\lambda^{-1}1_n)a+\lambda^{-1}(a-\lambda1_n))\geq s-1,$$
while for each $\mu\neq0$ we have
$$\dim\ker(x-\mu1_n)=\dim\ker((m_{\lambda^{-1},n}-\lambda^{-1}1_n)a+\lambda^{-1}(a-\lambda\mu1_n))\leq s+1.$$  Thus $x\in X$ and the result follows.  \end{proof}

\begin{lem}\label{unipotentpath}  Let $G=\prod_{i \to \omega} (\PSL_{2n_i}(F_i),d)$, where $n_i\to_\omega\infty$.
\begin{enumerate} \item[\rm(a)] Let $U_i$, $V_i$ be respectively the subgroup of $\SL_{2n_i}(F_i)$ of all block matrices $\big(\begin{smallmatrix} 1& k\cr 0&1 \end{smallmatrix}\big)$ with $k$ skew-symmetric with zero diagonal and the subgroup of all matrices $\big(\begin{smallmatrix} 1& k\cr 0&1 \end{smallmatrix}\big)$ with $k$ symmetric, and let $U_i^T,V_i^T$ be the images of $U_i$, $V_i$ under the transposition map $T$.  Then the images in $G$ of $\prod_{i \to \omega} (U_i,d)$, $\prod_{i \to \omega}(U_i^T,d), \prod_{i \to \omega}(V_i,d)$ and $\prod_{i \to \omega}(V_i^T,d)$ are geodesic subgroups.
\item[\rm(b)]   Suppose that \ $\bar{ }$ \ is an automorphism of each $F_i$ of order at most $2;$ write \ $\bar{ }$ also for the map induced on $\GL_{n_i}(F_i)$.  For each $i$ set
$$H_i=\left\{ \begin{pmatrix} a^{-1}& 0\cr 0& \bar a^T\end{pmatrix} \mid a\in\GL_{n_i}(F_i)\right\},\quad E_i=\left\{\begin{pmatrix}m_{\lambda^{-1},n_i}&0\cr0&m_{\bar\lambda,n_i}\end{pmatrix}\mid\lambda\neq0\right\}
$$ 
and
let $Y_i$ be the set of elements $\big(\begin{smallmatrix} a^{-1}& 0\cr 0& \bar a^T\end{smallmatrix}\big)$ in $H_i$ with $a$ satisfying the condition $(\ast\!\ast\!\ast)$.
Then the images $H$, $E$, $Y$ of $\prod_{i \to \omega}(H_i\cap\SL_{2n_i}(F_i),d)$, $\prod_{i \to \omega}(E_i,d)$, $\prod_{i \to \omega}(Y_i\cap\SL_{2n_i}(F_i),d)$   
 satisfy $H=EY;$ the elements in $E$ can be joined to the identity by a path of length $1$ and $Y$ a star-shaped space.
\end{enumerate} 
 \end{lem}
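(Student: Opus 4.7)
The plan is to treat the two parts separately. Part (a) parallels the argument of Section 2, while part (b) rests on Lemmas \ref{splitSLn} and \ref{ultimatepatch}.

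For part (a), I apply Proposition \ref{spaces}. Every element of $U_i$, $V_i$, $U_i^T$, $V_i^T$ has the form $1+N$ with $N^2=0$, and since for $\lambda\ne 1$ the matrix $\big(\begin{smallmatrix}1&k\cr 0&1\end{smallmatrix}\big)-\lambda 1_{2n_i}$ has full rank, the projective rank distance from the identity equals $\rk(k)/(2n_i)$. Bi-invariance and the abelianness of each subgroup reduce the midpoint construction to the following splitting problem: given an alternating (skew-symmetric with zero diagonal) or symmetric matrix $k$ of rank $r$ over $F_i$, write $k=k_1+k_2$ with $k_j$ of the same symmetry class and $\rk(k_j)\leq\lceil r/2\rceil+O(1)$. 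For alternating $k$, the symplectic normal form (a sum of hyperbolic planes) allows exact halving; for symmetric $k$, diagonalization in odd characteristic (or the analogue through hyperbolic and anisotropic summands in characteristic $2$) provides the splitting up to a bounded additive constant. Proposition \ref{spaces} with $\varepsilon_i=O(1/n_i)$ then yields that each of the four ultraproducts is geodesic.

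For part (b), I first establish $H=EY$. By Lemma \ref{ultimatepatch} and taking inverses, every $a\in\GL_{n_i}(F_i)$ factors as $a=x\cdot m_\mu$ with $x\in X$ and $\mu\in F_i^\times$. Setting $h_a:=\big(\begin{smallmatrix}a^{-1}&0\cr 0&\bar a^T\end{smallmatrix}\big)$, the map $a\mapsto h_a$ is an anti-homomorphism, so $h_a=h_{m_\mu}h_x\in E_iY_i$. As $E_i\subset\SL_{2n_i}(F_i)$, the factorisation restricts to $H_i\cap\SL_{2n_i}=E_i(Y_i\cap\SL_{2n_i})$, and passing to the ultraproduct yields $H=EY$.

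The star-shaped property of $Y$ is the main technical step. For $y=h_b\in Y_i$ with $r=\rk(b-1)$, the condition $(\ast\!\ast\!\ast)$, together with the observation that $b\in X\Leftrightarrow\bar b^T\in X$, forces $d_i(1,y)=r/n_i+O(1/n_i)$ (via the projective rank at $\mu=1$ and the $(\ast\!\ast\!\ast)$-bound for all other $\mu$). I apply Lemma \ref{splitSLn}(b) to write $b=b_1b_2$ in $\SL_{n_i}(F_i)$ with $\rk(b_j-1)\leq r/2+3$ and take $z=h_{b_1}$ as a candidate midpoint; the anti-homomorphism identity $h_{b_1}^{-1}h_b=h_{b_1b_2b_1^{-1}}$ reduces the second distance bound to controlling $\rk(b_2-1)$. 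The hardest point, and the main obstacle, is to ensure $b_1,b_2\in X$ so that $z\in Y$ and the dyadic midpoint recursion stays within $Y$; I plan to handle this by inspecting the explicit construction in Lemma \ref{splitSLn} (on each cyclic $F\langle b\rangle$-summand the split block has characteristic polynomial $x(x-1)^s-1$, which has no triple roots by a $p,p',p''$ check, hence nontrivial eigenvalues of multiplicity at most $2$), and by absorbing any residual correction via Lemma \ref{ultimatepatch} into an additive error of order $O(1/n_i)$. Proposition \ref{spaces}, applied with $\varepsilon_i=O(1/n_i)$, then furnishes a geodesic from $1$ to $y$ inside $Y$.

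Finally, for the claim that each $e_\lambda\in E$ can be joined to $1$ by a path of length at most $1$: decompose $e_\lambda=e_\lambda^{(1)}e_\lambda^{(2)}$ with $e_\lambda^{(1)}=m_{\lambda^{-1},n_i}\oplus 1_{n_i}$ and $e_\lambda^{(2)}=1_{n_i}\oplus m_{\bar\lambda,n_i}$, each factor lying in an embedded $\SL_{n_i}(F_i)$ block. Taking $\mu=1$ makes the opposite block of $e_\lambda^{(j)}-\mu 1$ vanish, yielding $d(1,e_\lambda^{(j)})\leq 1/2$. Since the first part of Theorem \ref{main}, proved in Section 2, already establishes that $G=\prod_{i\to\omega}(\PSL_{2n_i}(F_i),d)$ is a geodesic space, geodesics of length at most $1/2$ in $G$ join $1$ to each $e_\lambda^{(j)}$; concatenating the first geodesic with the left-translate by $e_\lambda^{(1)}$ of the second produces a path from $1$ to $e_\lambda$ of total length at most $1$.
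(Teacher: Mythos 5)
Parts (a), the factorisation $H=EY$, and the length-one path for elements of $E$ are all sound. Your part (a) is a mild repackaging of the paper's argument (the paper truncates the normal form $k=cdc^T$ row by row to build the geodesic directly, whereas you feed the additive splitting into Proposition \ref{spaces}; both rest on the same normal forms). Your path for $E$ differs from the paper's: you invoke the already-proved geodesicity of $G$ itself and split $e$ into two blocks each at projective distance at most $\frac12$ from $1$, which yields a path in $G$ rather than in $H$; that is weaker than what the paper proves but suffices for the application to path-connectedness.

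The star-shapedness of $Y$, however, has a genuine gap, and it sits exactly where you flag "the main obstacle". Your midpoint recursion needs, at \emph{every} dyadic level, not only the upper bounds $\rk(b_j-1)\leq\frac12\rk(b-1)+3$ from Lemma \ref{splitSLn}(b) but also a \emph{lower} bound $d(1,h_{c})\geq\rk(c-1)/n_i-O(1/n_i)$ for each intermediate matrix $c$; without it, $\min_\mu\big(\rk(c-\mu^{-1})+\rk(c-\bar\mu)\big)$ may be far smaller than $2\rk(c-1)$ and the candidate midpoint overshoots $\frac12 d+\ep$. This amounts to all iterated factors satisfying a uniformly bounded version of $(\ast\!\ast\!\ast)$. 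Your characteristic-polynomial observation $x(x-1)^s-1$ controls only the first factor $h$ on the single special cyclic block; it says nothing about the second factor $k=h^{-1}g$ (whose eigenspaces for $\mu\neq1$ on that block can have dimension of order $\phi_2$), nor about the blocks on which a factor coincides with $g$ restricted to a submodule. And the fallback of "absorbing a residual correction via Lemma \ref{ultimatepatch}" does not give an $O(1/n_i)$ error: factoring $b_1=m_\mu x$ perturbs $h_{b_1}$ by an element of $E_i$, and such elements sit at projective distance of order $\frac12$ from the identity, not $o(1)$. The paper circumvents all of this with a different construction: conjugate $a_i$ so that $a_i-1$ is supported on an $r_i$-dimensional block where the compression $w_i'$ has $\rk(w_i'-1)\geq r_i-1$, take a geodesic from $1$ to $(w_i')$ in $\prod_{i\to\omega}\SL_{r_i}(F_i)$ with the \emph{ordinary} rank metric (where Lemma \ref{splitSLn} with $\lambda=1$ and Proposition \ref{spaces} apply without any eigenvalue bookkeeping), and push it forward via $q_i'(t)\mapsto p_i(t)$. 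Only upper bounds on projective rank are needed along this path, giving a $1$-Lipschitz map; since its endpoints are at distance exactly $\ell$ (this is the one place $(\ast\!\ast\!\ast)$ is used), it is automatically a geodesic. You would need either to adopt this device or to prove the uniform eigenvalue control for all iterated factors, which your proposal does not do.
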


\begin{proof}  (a) Let $U$ be the image in $G$ of $\prod_{i \to \omega} (U_i,d)$ and let $u\in U$ be represented by the sequence $(u_i)$ with $u_i\in U_i$ for each $i$.
Let $u_i=\big(\begin{smallmatrix} 1& k_i\cr 0&1 \end{smallmatrix}\big)$.  Then there is a matrix $c_i\in\GL_{n_i}(F_i)$ such that $k_i=c_id_ic_i^T$, where $d_i$ is the block diagonal matrix with $s_i$  
diagonal $2\times2$ entries $\big(\begin{smallmatrix} 0& 1\cr -1&0 \end{smallmatrix}\big)$ and $n_i-2s_i$ scalar entries $0$; so $\rk(u_i-1)=2s_i$, see \cite{albert} for this and similar facts especially in characteristic 2. For $t\in[0,1]$ let $d_i(t)$ be the matrix having the same
first $2\lfloor s_it\rfloor$ rows as $d_i$ and remaining rows zero, and let $k_i(t)=c_id_i(t)c_i^T$.  Then 
$$\rk\left(\begin{pmatrix}1&k_i(t)\cr 0 &1\end{pmatrix}-1\right)=\textstyle 2\lfloor s_it\rfloor,$$ which differs from
$t\,\rk(u_i-1)$ by at most $2$.   On the other hand, for $\lambda\neq0,1$ we have
$$\rk\left(\begin{pmatrix}1&k_i(t)\cr 0 &1\end{pmatrix}-\lambda1\right)=2n.$$
So the composite of the function $t\mapsto\big(\big(\begin{smallmatrix} 1& k_i(t)\cr 0 & 1 \end{smallmatrix}\big)\big)$ and the quotient map from the product of the groups $U_i$ to $U$ is a geodesic in $U$ from $1$ to $u$. 

The proof for the image of $\prod_{i \to \omega}(V_i,d)$ is similar but slightly more complicated.  For $k_i$ symmetric we can find $c_i\in\GL_{n_i}(F_i)$ with $k_i=c_id_ic_i^T$ and with $d_i$ a block diagonal matrix with $s_i$ entries $\big(\begin{smallmatrix} 0& 1\cr 1&0 \end{smallmatrix}\big)$, then $s_i'$ non-zero diagonal entries, and finally $n_i-2s_i-s_i'$ zero entries; here, $s_i,s_i'$ satisfy $2s_i+s_i'=\rk(k_i)$.  We take $d_i(t)$ to be the matrix having the same first $2\lfloor(s_i+\frac12 s_i')t\rfloor$ rows as $d_i$ and remaining rows zero, and argue as in the above paragraph.  The same arguments give the result for the transposed subgroups.

(b) It follows easily from Lemma \ref{ultimatepatch} 
that $H_i\cap\SL_{2n_i}(F_i)=E_i(Y_i\cap\SL_{2n_i}(F_i))$ for each $i$ and hence $H=EY$.  An argument very similar to the proof in (a) shows that every element of $E$ can be joined to the identity element by a path in $H$ of length at most $1$.
We now prove that for each $y\in Y$ there is a geodesic in $Y$ connecting $1$ to $y$. 
Let $\ell=d(1,y)$ be the projective distance to the identity matrix and let the sequence $\left(\,\left(\begin{smallmatrix} a_i^{-1}&0\cr 0&\bar a_i^T \end{smallmatrix}\right)\,\right)$ represent $y$.  So, writing $r_i:=\rk (a_i-1)$
for each $i$, we can assume that 
$\rk (a_i-\lambda1)\geq r_i-2$ for each $\lambda\neq0$.
It follows that 
$$\min_{\lambda\neq0}(\rk( a_i-\lambda^{-1}1)+\rk(a_i -\bar\lambda1))\geq 2r_i-4 .$$ Hence $n_i^{-1}r_i\to_\omega \ell$. 


For each $i$ there is a matrix $x_i\in \GL_{n_i}(F_i)$ such that $x_i^{-1}a_ix_i$ has the form
$\big(\begin{smallmatrix} 1_{n_i-r_i} &v_i\cr0&w_i\end{smallmatrix}\big)$; here $w_i$ is invertible and does not have $1$ as an eigenvalue.  Let $w_i'$ be the result of multiplying $w_i$ by a diagonal matrix
with one diagonal entry $(\det w_i)^{-1}$ and all other diagonal entries $1$. Thus $w_i'\in\SL_{r_i}(F_i)$ and $\dim\rk(w_i'-1_{r_i})\leq1$.
By Lemma \ref{spaces} and Lemma \ref{splitSLn}, the metric
ultraproduct of the groups $\SL_{r_i}(F_i)$ with the ordinary rank metric is a geodesic space.  In this space the image $w$ of the sequence $(w_i')$ has length $1$.  Let
$(q_i(t))$ represent a geodesic from $1$ to $w$
in this space.  Thus for all $t_1,t_2\in [0,1]$ we have $\frac1{r_i}\rk(q_i(t_1)^{-1}q_i(t_2)-1)\to|t_1-t_2|$.  

For each $t\in[0,\ell]$ we now define 
$$q_i'(t)=x_i\begin{pmatrix} 1_{n_i-r_i}&0\cr 0& q_i(\ell^{-1} t)\end{pmatrix}x_i^{-1}\quad\hbox{and}\quad
p_i(t)=\begin{pmatrix} q_i'(t)^{-1}&0\cr 0&\overline {q_i'(t)^T}\end{pmatrix},$$ 
and let $(p_i(t))$ represent $p(t)$ in $G$.  Thus $p(0)=1$, $p(\ell)=y$.  Moreover
for $t_1,t_2\in[0,\ell]$ we have
$$\begin{array}{rl} 
&\displaystyle\frac1{2n_i}\min_{\lambda\neq0}\left(\rk\begin{pmatrix} (q_i'(t_1)^{-1}q_i'(t_2))^{-1}&0\cr 0& \overline{q_i'(t_1)^{-1}q_i'(t_2))}^T\end{pmatrix}-\lambda1\right)\cr \cr
\leq&\displaystyle\frac1{2n_i}\big( \rk ((q_i'(t_1)^{-1}q_i'(t_2))^{-1}-1)+\rk (\overline{q_i'(t_1)^{-1}q_i'(t_2))}^T-1)\big) \cr\cr
=&\displaystyle\frac{r_i}{n_i}\,\frac1{r_i}\rk((q_i(\lambda^{-1}t_1)^{-1}q_i(\lambda^{-1}t_2))^{-1}-1)\cr
 \to &|t_1-t_2|\quad\hbox{as }i\to_\omega\infty.
\end{array}$$ Therefore $d(p(t_1),p(t_2))\leq |t_1-t_2|$ for all $t_1,t_2$ and since $d(p(0),p(\ell))=\ell$ it follows that $p$ is a geodesic from $1$ to $y$, as required.
\end{proof}
\section{Metric ultraproducts of symplectic, unitary and orthogonal groups}

For the properties of symplectic, unitary and orthogonal groups that we shall use, we refer the reader to \cite[Chapter 2]{KL} and \cite[Chapter 1 and \S 11.3]{carter}.

\begin{lem}\label{symplecticproduct}  Let $G=\Sp_{2n}(F)$, and regard $G$ as the subgroup of $\SL_{2n}(F)$ of block matrices  
$\left( \begin{smallmatrix} a& b\cr c& d\end{smallmatrix} \right)$ with $ad^T-bc^T=1_n$ and $ab^T$, $cd^T$ symmetric. Then $G=U^TUU^TH$ where
$$U= \left\{ \begin{pmatrix} 1& k\cr 0& 1\end{pmatrix}\mid k\hbox {  symmetric} \right\},\quad
U^T= \left\{ \begin{pmatrix} 1& 0\cr k& 1\end{pmatrix}\mid k\hbox {  symmetric} \right\}$$
and
$$H= \left\{ \begin{pmatrix} a^{-1}& 0\cr 0& a^T\end{pmatrix}\mid a\in\GL_{n}(F) \right\}.$$
\end{lem}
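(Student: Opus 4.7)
The plan is to show $G = U^T \cdot U \cdot P^-$, where $P^- := HU^T$ is the Siegel parabolic opposite to $P = HU$; since $H$ normalises $U^T$, this is equivalent to the stated identity $G = U^T U U^T H$. As a warm-up, suppose $g = \left(\begin{smallmatrix} a & b \\ c & d \end{smallmatrix}\right) \in G$ has invertible $d$-block; then the symplectic relation $b^T d = d^T b$ makes $bd^{-1}$ symmetric, so $u_2 := \left(\begin{smallmatrix} 1 & bd^{-1} \\ 0 & 1 \end{smallmatrix}\right)$ lies in $U$, and a direct computation, using also $cd^T = dc^T$ and $d^T a - b^T c = 1$, yields $u_2^{-1} g = \left(\begin{smallmatrix} d^{-T} & 0 \\ c & d \end{smallmatrix}\right) \in P^-$, so $g \in U \cdot P^-$.

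For a general $g$, I would find a symmetric matrix $k_1$ such that the new lower-right block $d - k_1 b$ of $u_1^{-1} g$ is invertible, where $u_1 := \left(\begin{smallmatrix} 1 & 0 \\ k_1 & 1 \end{smallmatrix}\right) \in U^T$; the warm-up then gives $u_1^{-1} g \in U \cdot P^-$, so $g \in U^T U \cdot P^-$. Identifying $G/P^-$ with the Lagrangian Grassmannian $\mathcal L$ of $F^{2n}$ (base point $L_0 := \{(0,v) : v \in F^n\}$), the condition that $d - k_1 b$ be invertible translates exactly into transversality of the Lagrangians $gL_0 = \{(bv, dv) : v \in F^n\}$ and $\Gamma_{k_1} := \{(v, k_1 v) : v \in F^n\}$. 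Since $\Gamma_{k_1}$ runs over all Lagrangians transverse to $L_0$ as $k_1$ runs over symmetric matrices, it suffices to show that any two Lagrangians of $F^{2n}$ admit a common transversal Lagrangian.

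This last fact I would establish by induction on $n$. The base case $n = 1$ is immediate, since $\mathbb P^1(F)$ contains at least three points for any field $F$, providing a third line distinct from any two given lines through $0$ in $F^2$. For the inductive step, given Lagrangians $L_1, L_2 \subseteq F^{2n}$, pick $v \in F^{2n} \setminus (L_1 \cup L_2)$---possible because a union of two proper subspaces never exhausts an $F$-vector space. In the symplectic quotient $V' := v^\perp / \langle v \rangle$, which has dimension $2(n-1)$, the images of $L_1 \cap v^\perp$ and $L_2 \cap v^\perp$ are Lagrangians, and the inductive hypothesis produces a common transversal $\widetilde L \subseteq V'$; its preimage in $v^\perp$ is an $n$-dimensional isotropic, hence Lagrangian, subspace of $F^{2n}$, and its transversality to $L_1$ and $L_2$ follows from $v \notin L_1 \cup L_2$. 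The main technical point is this lifting-and-transversality verification, but once set up it is straightforward symplectic linear algebra.
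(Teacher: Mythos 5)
Your argument is correct, and it takes a genuinely different route from the paper. The paper proceeds by explicit matrix reduction: acting on the left and right by elements of $H$ to put the top-left block of $g$ in the form $\left(\begin{smallmatrix}1_r&0\cr 0&0\end{smallmatrix}\right)$, using the symplectic relations to see that the top-right block then has the shape $\left(\begin{smallmatrix}P&Q\cr 0&S\end{smallmatrix}\right)$ with $S$ invertible, multiplying by a specific $x_3\in U^T$ to make the top-left block invertible, normalising further by $H$, and finally clearing the top-right block with an element of $U$, giving $G=HU^TUU^TH=U^TUU^TH$. Your proof reinterprets the whole thing geometrically: $P^-=HU^T$ is the stabiliser of the standard Lagrangian $L_0$, and the needed inclusion $g\in U^TUP^-$ reduces to finding a graph Lagrangian $\Gamma_{k_1}$ (with $k_1$ symmetric) transverse to $gL_0$, which you deliver via the classical fact that any two Lagrangians admit a common transversal Lagrangian, proved by the clean induction on the symplectic reduction $v^\perp/\langle v\rangle$. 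The paper's approach is more elementary and entirely matrix-theoretic, producing the decomposition by hand without invoking the Lagrangian Grassmannian; your approach is more conceptual, isolates the key geometric input (existence of a common transversal), and makes the role of the opposite Siegel parabolic transparent. Both are valid over an arbitrary field and yield exactly the stated decomposition, since $H$ normalises $U^T$ so $U^TUP^-=U^TUU^TH$.
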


\begin{proof}  Clearly $U$, $U^T$, and $H$ are subgroups of $G$.
Let $g=\left( \begin{smallmatrix} a& b\cr c& d\end{smallmatrix} \right)\in G$. 
There are matrices $y_1,y_2\in\GL_n(F)$ with $y_1ay_2=\left( \begin{smallmatrix} 1_r&0 \cr 0& 0\end{smallmatrix} \right)$ where $r=\rk(a)$. Thus we can find $x_1,x_2\in H$ such that
$x_1gx_2$ has the form $\left( \begin{smallmatrix} a_1& b_1\cr c_1& d_1\end{smallmatrix} \right)$ with $a_1=\left( \begin{smallmatrix} 1_r& 0\cr 0& 0\end{smallmatrix} \right)$.  Write
$b_1=\left( \begin{smallmatrix} P& Q\cr R& S\end{smallmatrix} \right)$ with $P$ an $r\times r$ matrix.
Since $b_1a_1^T$ is symmetric we have $R=0$, and hence $S$ is invertible since the matrix $( a_1\; b_1)$ must have rank $n$.
Let 
$x_3=\left( \begin{smallmatrix} 1& 0\cr k& 1\end{smallmatrix} \right)$, where $k=\left( \begin{smallmatrix} 0& 0\cr 0& 1_{n-r}\end{smallmatrix} \right)$;
then $x_1gx_2x_3$ has the form $\left( \begin{smallmatrix} a_2& b_2\cr c_2& d_2\end{smallmatrix} \right)$ where $a_2$ is the invertible matrix $\left( \begin{smallmatrix} 1& Q\cr 0& S\end{smallmatrix} \right)$.  Adjusting the matrix $x_1\in H$ to effect further row operations in the matrix $(a_2\;b_2)$,
we can now assume that $x_1gx_2x_3$ has the form $\left( \begin{smallmatrix} 1& b_2\cr c_2& d_2\end{smallmatrix} \right)$.  Now $b_2$, $c_2$ are
symmetric and $d_2^T-b_2c_2^T=1$; and defining $x_4=\left( \begin{smallmatrix} 1& -b_2\cr 0& 1\end{smallmatrix} \right)$ we find that $\left( \begin{smallmatrix} 1& b_2\cr c_2& d_2\end{smallmatrix} \right) x_4=\left( \begin{smallmatrix} 1& 0\cr c_2& 1\end{smallmatrix} \right)$.  It follows that 
$G=HU^TUU^TH$.  However $H$ normalizes $U$ and $U^T$, and so $G=U^TUU^TH$, as required.  \end{proof}

The conclusion of Theorem \ref{main} for metric ultraproducts of symplectic groups (over not necessarily finite fields) follows immediately from Lemmas \ref{unipotentpath} and \ref{symplecticproduct}.


Modifications to the proof of Lemma \ref{symplecticproduct}, together with Lemma \ref{unipotentpath}, give the conclusion of the main theorem for metric ultraproducts of some other classical groups.

\begin{lem}\label{unitaryproduct}  Let $G=\Omega^+_{2n}(F)$  or $G=\SU_{2n}(F)$ with $n\geq3$.
In the latter case, there is an involution $\bar {\phantom n}$ on $F;$ in the former case we define
 $\bar {\phantom n}$ to be the identity map on $F$.  We regard $G$ as the subgroup of block matrices in $\SL_{2n}(F)$ of the form
$\big( \begin{smallmatrix} a& b\cr c& d\end{smallmatrix} \big)$ with 
$$a\bar d^T+b\bar c^T=1_n,\quad a\bar b^T+b\bar a^T=0\quad\hbox{and} \quad c\bar d^T+d\bar c^T=0.\eqno(\ast\ast)$$ Then $G= VV^TV(H\cap G)$ where
$$V= \left\{\left( \begin{matrix} 1& k\cr 0& 1\end{matrix} \right)\mid
\bar k^T=-k\right\},\quad V^T=\left\{\left( \begin{matrix} 1& 0\cr k& 1\end{matrix} \right)\mid \bar k^T=-k\right\}$$ and
$$H= \left\{ \left( \begin{matrix} a^{-1}& 0\cr 0& \bar a^T\end{matrix} \right)\mid a\in \GL_n(F)\right\}. $$   
\end{lem}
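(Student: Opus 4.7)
The plan is to adapt the Bruhat-style reduction carried out in the proof of Lemma~\ref{symplecticproduct}, with the symmetric unipotent subgroups $U, U^T$ there replaced by the skew-Hermitian unipotent subgroups $V, V^T$ here. A key preliminary observation is that every element of $H$ preserves the sesquilinear form underlying $G$, so the defining relations $(\ast\ast)$ are inherited after left or right multiplication by elements of $H$, even though such elements need not themselves lie in $G$.

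Given $g = \begin{pmatrix} a & b \\ c & d \end{pmatrix} \in G$, I would first choose $x_1, x_2 \in H$ so that $x_1 g x_2$ has top-left block $a_1 = \begin{pmatrix} 1_r & 0 \\ 0 & 0 \end{pmatrix}$, where $r = \rk(a)$. The constraint $a_1 \bar b_1^T + b_1 \bar a_1^T = 0$ together with the full row rank of $(a_1\;b_1)$ then forces $b_1 = \begin{pmatrix} P & Q \\ 0 & S \end{pmatrix}$ with $P$ skew-Hermitian and $S \in \GL_{n-r}(F)$. Right-multiply by $x_3 = \begin{pmatrix} 1 & 0 \\ k & 1 \end{pmatrix} \in V^T$, with $k = \begin{pmatrix} 0 & 0 \\ 0 & L \end{pmatrix}$ and $L$ an invertible skew-Hermitian $(n-r) \times (n-r)$ matrix; the new top-left block $\begin{pmatrix} 1_r & 0 \\ 0 & SL \end{pmatrix}$ is invertible. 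Readjusting $x_1$ within $H$ by a further row operation normalises this block to $1_n$; the constraints $(\ast\ast)$ then force the new off-diagonal blocks $b_2, c_2$ to be skew-Hermitian, with the bottom-right block determined so that a final right-multiplication by $x_4 = \begin{pmatrix} 1 & -b_2 \\ 0 & 1 \end{pmatrix} \in V$ clears $b_2$ and leaves $\begin{pmatrix} 1 & 0 \\ c_2 & 1 \end{pmatrix} \in V^T$.

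Unwinding gives $g \in H \cdot V^T \cdot V \cdot V^T \cdot H$. Since $H$ normalises both $V$ and $V^T$, one $H$-factor may be absorbed, yielding $g \in V^T V V^T H$. Because $G$ is closed under inversion (and $V, V^T$ are groups), taking inverses and moving $H$-factors through gives equivalently $g \in V V^T V H$; the trailing $H$-factor lies in $H \cap G$ since every other factor is in $G$.

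The main obstacle is the existence of the invertible skew-Hermitian $L$ in the orthogonal case, where $L$ must be skew-symmetric. Since invertible skew-symmetric matrices of odd order do not exist in characteristic different from $2$, one needs $n - r$ to be even. This is automatic: the Weyl group $W(D_n)$ consists of signed permutations with an even number of sign changes, so $\rk(a) \equiv n \pmod 2$ for every $g \in \SO^+_{2n}$, and hence for every $g \in \Omega^+_{2n} \subseteq \SO^+_{2n}$, making $n - r$ even. The hypothesis $n \geq 3$ provides enough room for all the required matrix manipulations.
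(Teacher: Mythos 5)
Your proof is correct and follows the same overall strategy as the paper's --- the Bruhat-style reduction of Lemma \ref{symplecticproduct}, using that $H$ preserves the form so that $(\ast\ast)$ survives multiplication by $x_1,x_2\in H$ --- but it handles the one delicate point differently. To make the top-left block invertible you insert an invertible skew-Hermitian $L$ of size $n-r$ and, in the only problematic case (orthogonal groups in odd characteristic, where invertible skew-symmetric matrices exist only in even size), you rule out odd $n-r$ via the parity invariant $\rk(a)\equiv n\pmod 2$. The paper instead uses a skew-Hermitian $k_1$ of corank at most one and eliminates the residual possibility that $a_2$ has corank one by computing $\det z=-1$, contradicting $z\in\SL_{2n}(F)$. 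Your parity fact is correct for $\Omega^+_{2n}(F)$: it is the statement that elements of the Dickson kernel preserve the two families of maximal totally singular subspaces, whence $n-\rk(a)=\dim(Wg\cap W^{*})$ is even, where $W$, $W^{*}$ are the two standard totally singular summands; but the one-line appeal to $W(D_n)$ should be fleshed out (one needs that $\rk(a)\bmod 2$ is constant on Bruhat cells, or one argues directly with the two rulings), and the claim as stated for all of $\SO^+_{2n}$ fails in characteristic $2$, where the Dickson kernel rather than $\SO$ is the relevant subgroup --- harmless here, since in characteristic $2$ one can take $L=1_{n-r}$. Your route is arguably the cleaner one: the paper's determinant contradiction rests on the assertion that $n$ is odd in the bad case, which itself appears to need the same parity input. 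One genuine slip at the end: inverting gives $g\in HV^{T}VV^{T}=V^{T}VV^{T}H$ again, so ``taking inverses'' does not convert $V^{T}VV^{T}H$ into $VV^{T}VH$; to swap $V$ and $V^{T}$ conjugate by $\big(\begin{smallmatrix}0&1_n\cr 1_n&0\end{smallmatrix}\big)$, which preserves the form and normalizes $G$ and $H$, or run the reduction on the lower blocks instead. This is cosmetic (the paper's own proof stops at $V^{T}VV^{T}H$, and Lemma \ref{unipotentpath}(a) treats $V$ and $V^{T}$ symmetrically), but it is worth fixing.
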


\begin{proof} Clearly $V$, $V^T$ are subgroups of $G$.  We have $H\leq G$ if and only if $\det a$ is fixed by the automorphism \ $\bar{}$ \ for each $a\in\GL_n(F)$. In particular this holds if \ $\bar{}$ \ is the identity map on $F$, and in this case the conclusion takes the form $G= VV^TVH$.

Let $g=\left( \begin{smallmatrix} a& b\cr c& d\end{smallmatrix} \right)\in G$. Arguing as in Lemma \ref{symplecticproduct} we find $x_1,x_2\in H$ such that
$x_1gx_2$ has the form $\big( \begin{smallmatrix} a_1& b_1\cr c_1& d_1\end{smallmatrix} \big)$ with $a_1=\left( \begin{smallmatrix} 1_r& 0\cr 0& 0\end{smallmatrix} \right)$ and
$b_1=\left( \begin{smallmatrix} P& Q\cr 0& S\end{smallmatrix} \right)$, where $P$ is an $r\times r$ matrix and $S$ is invertible.  Changing $x_1\in H$ to effect further row operations on $(a_1\;b_1)$, we can assume that $S=1_{n-r}$ and $Q=0$.  

We claim that there is an $(n-r)\times(n-r)$ matrix $k_1$ satisfying 
$k_1+\overline{k_1}^T=0$ and with rank $n-r$ except possibly in the case when $n-r$ is odd, $\mathrm {char}\;F\neq2$, and \ $\bar{}\;=\mathrm{id}_F$; and that in this latter case there   is an $(n-r)\times(n-r)$ matrix $k_1$ satisfying 
$k_1+\overline{k_1}^T=0$ and with rank at least $n-r-1$.  We begin with a matrix of block diagonal form 
 with $\lfloor\frac12(n-r)\rfloor$ diagonal entries $\big(\begin{smallmatrix} 0&1\cr -1&0\end{smallmatrix}\big)$.   If $n-r$ is odd, we must add a final diagonal entry $\lambda$ satisfying $\bar\lambda=-\lambda$. If 
 $\mathrm{char}\; F=2$ we may take $\lambda=1$. If  $\mathrm{char}\; F\neq2$ and \ $\bar{}\;\neq\mathrm{id}_F$ then $F$ is obtained from the fixed field of \ $\bar{}$ \ by adjoining a square root, and we may take $\lambda$ to be this square root.  Otherwise, we take $\lambda=0$.  Our claim follows.
 
Let $k$ be the $n\times n$ block matrix $\big(\begin{smallmatrix}0&0\cr 0&k_1\end{smallmatrix}\big)$ and $x_3$ the element $\big(\begin{smallmatrix} 1&0\cr k&1\end{smallmatrix}\big)$ of $V^T$.  
 An easy calculation shows that 
$x_1gx_2x_3$ has the form $\big( \begin{smallmatrix} a_2& b_2\cr c_2& d_2\end{smallmatrix} \big)$ where $\rk(a_2)\geq \rk (k_1)$.  

Suppose that $a_2$ is not invertible; so $n$ is odd, ${\rm char}\,F\neq2$ and \ $\bar{}$ \ is the identity.
In particular, $H\leq G$. 
Adjusting $x_1$ as above, we can assume that the top half of $x_1gx_2x_3$ has form
$\big(\begin{smallmatrix} 1_{n-1}&0&P&0\cr 0&0&0&1\end{smallmatrix}\big)$, with $P$ an $(n-1)\times (n-1)$ matrix and each $0$ a row or column vector. From $(\ast\ast)$ we have $P+\bar P ^T=0$, and so $y\in V^T$ where $$y=\begin{pmatrix} 1_{n-1}&0&0&0 \cr 0&1&0&0 \cr -P&0&1_{n-1}&0\cr 0&0&0&1\end{pmatrix}.$$ Therefore $z:=x_1gx_2x_3y\in G$, and $\det z=1$.  Now $z$ has the form
$$\begin{pmatrix} 1_{n-1}&0&0&0\cr 0&0&0&1\cr c_{11}&c_{12}&d_{11}&d_{12}\cr
c_{21}&c_{22}&d_{21}&d_{22}\end{pmatrix},$$ 
and row expansion yields $$\det z=-\det\begin{pmatrix}
c_{12}&d_{11}\cr c_{22}&d_{21}\end{pmatrix}.$$  
However from $(\ast\ast)$ we have
$$\begin{pmatrix} 1_{n-1}&0\cr 0&0\end{pmatrix}\begin{pmatrix}d_{11}&d_{21}\cr d_{12}&d_{22}\end{pmatrix}+\begin{pmatrix} 0&0\cr 0&1\end{pmatrix}\begin{pmatrix}c_{11}&c_{21}\cr c_{12}&c_{22}\end{pmatrix}=1_n,$$ and hence $d_{11}=1$, $d_{21}=0$, $c_{12}=0$, $c_{22}=1$.
Therefore $\det z=-\det\big(\begin{smallmatrix} 0& 1_{n-1}\cr 1&0\end{smallmatrix}\big), $ and this is equal to $-1$ because $n$ is odd.  Since $\mathrm{char}\, F\neq2$ this is a contradiction.

It follows that $a_2$ is invertible. Replacing $x_1$ by $\big(\begin{smallmatrix} a_2^{-1}&0\cr 0&\overline{a_2}^T\end{smallmatrix}\big)x_1$ we can assume that $a_2=1$.  The equations $(\ast\ast)$ show that $b_2+\bar b_2^T=0$, and setting $x_4=\left( \begin{smallmatrix} 1& -b_2\cr 0& 1\end{smallmatrix} \right)\in V$ we find that $x_1gx_2x_3x_4\in V^T$.  
Hence $g\in HV^TVV^TH$ and $g\in V^TVV^TH$ as $H$ normalizes $V$, $V^T$. Our conclusion now follows.
\end{proof}

The above lemma shows that the conclusion of Theorem \ref{main} holds for metric ultraproducts of
(not necessarily finite) groups of type $\PSU_{2n}(F)$ and of type ${\rm P}\Omega^+_{2n}(F)$.  To complete the proof of Theorem \ref{main} it suffices to prove that all remaining metric ultraproducts of finite simple groups that arise are isomorphic to such ultraproducts.

\begin{prop}\label{exhaust} \begin{enumerate} \item[\rm(a)] Every metric ultraproduct $\prod_{i \to \omega}(\PSU_{n_i}(F_i),d)$ with $n_i\to_\omega\infty$ is isomorphic to such an ultraproduct with all $n_i$ even.  
\item[\rm(b)] Every metric ultraproduct of finite simple orthogonal groups of $\omega$-unbounded degree is isomorphic to an ultraproduct of groups ${\rm P}\Omega^+_{2m_i}(F_i)$ with $m_i\to_\omega\infty$. 
\end{enumerate}\end{prop}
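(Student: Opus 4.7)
The approach for both parts is uniform: exhibit a natural algebraic inclusion $G_n \hookrightarrow G_{n+k}$ of finite simple classical groups with $k$ bounded, such that the induced homomorphism between the corresponding metric ultraproducts is an isomorphism. Two ingredients are needed in each case. The first is an \emph{asymptotic-isometry} statement: the embedding distorts the projective rank metric only by the scaling factor $n/(n+k) \to 1$ plus an additive $O(k/n)$ error coming from the fixed non-degenerate complement, as follows from a short rank-arithmetic calculation on block-diagonal matrices. The second is a \emph{uniform density} statement: every element of $G_{n+k}$ lies within projective rank distance $O(k/n)$ of the image of $G_n$. Granted these, the induced homomorphism between the ultraproducts is an isometric bijection and hence an isomorphism.

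For (a), we may restrict to a set in $\omega$ on which $n_i$ is odd (otherwise the conclusion is immediate), and embed $\PSU_{n_i}(F_i) \hookrightarrow \PSU_{n_i+1}(F_i)$ as the stabiliser of a chosen non-isotropic vector $v_i$ via $g \mapsto \mathrm{diag}(g,1)$. For the density step, given $\tilde g \in \SU_{n_i+1}(F_i)$, Witt's extension theorem produces a unitary $s$ of the ambient Hermitian space sending $\tilde g v_i$ back to a scalar multiple of $v_i$ and acting as the identity on the orthogonal complement of $\langle v_i, \tilde g v_i\rangle$, so that $\rk(s - 1) \leq 2$. Then $s \tilde g$ stabilises $v_i$, and after a diagonal correction of bounded rank it lies in $\SU_{n_i}(F_i)$, yielding $d(g, s\tilde g) = O(1/n_i)$.

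For (b), after restriction the ultrafilter selects one of the families $\mathrm{P}\Omega^+_{2n_i}(F_{q_i})$, $\mathrm{P}\Omega^-_{2n_i}(F_{q_i})$ or $\mathrm{P}\Omega_{2n_i+1}(F_{q_i})$ (the odd-dimensional case requires $q_i$ odd, since in even characteristic it coincides with the symplectic family). The $\mathrm{P}\Omega^+$ case is already in the required form. Over a finite field a direct computation with discriminants shows that the orthogonal sum of two copies of the essentially unique anisotropic binary form is hyperbolic, so we obtain $\Omega^-_{2n}(F_q) \hookrightarrow \Omega^+_{2n+2}(F_q)$ as the stabiliser of an anisotropic $2$-plane; similarly, if $d$ is the discriminant of the anisotropic line in the decomposition for $\Omega_{2n+1}(F_q)$, any $c \in F_q^*$ with $-cd$ a square gives $\Omega_{2n+1}(F_q) \hookrightarrow \Omega^+_{2n+2}(F_q)$. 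The density step of (a) now transfers with the orthogonal form of Witt's theorem, any required spinor-norm or determinant correction being absorbed by appending at most two reflections, each contributing $O(1/n)$ to the projective rank distance.

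The principal obstacle throughout is the density step: we must guarantee that the approximating correction $s$ has rank-distance from the identity bounded by an absolute constant \emph{independent} of $n$, and that the subsequent adjustments needed to land in the simple quotient ($\PSU_{n_i}$, $\mathrm{P}\Omega^+_{2n_i}$, etc.) do not inflate this bound. This rests on using Witt's theorem in its precise form --- producing an isometry that acts trivially outside a bounded-dimensional subspace containing the two anisotropic pieces to be identified --- together with reflection- or diagonal-matrix corrections of bounded rank; the remaining bookkeeping is routine.
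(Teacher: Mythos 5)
Your proposal is correct and follows essentially the same route as the paper: both rest on an orthogonal splitting $V=L\perp S$ with $\dim S$ bounded together with Witt's extension theorem, which shows that every isometry of the larger space is within bounded (un-normalized) rank distance of one preserving the splitting, so that the induced map of metric ultraproducts is an isometric bijection. The only differences are cosmetic: you embed the given group into a slightly larger plus-type (resp.\ even unitary) group, whereas the paper splits off the bounded anisotropic piece and embeds the plus-type group of slightly smaller rank into the given one; and you land in the derived subgroup by explicit bounded-rank reflection or diagonal corrections, whereas the paper sidesteps this by using simplicity of the ultraproduct and approximating commutators.
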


For the proof we need the following lemmas.

\begin{lem}\label{enough}  Let $V=L\oplus S$ be an orthogonal decomposition of a space $V$ with a non-singular sesquilinear or quadratic form and let $g\colon V\to V$ be a form-preserving isomorphism.  Then there is a form-preserving isomorphism $h\colon L\to L$ with $\rk(g|_L-h)\leq 3\dim S$.  \end{lem}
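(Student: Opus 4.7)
My plan is to locate a large subspace of $L$ on which $g$ already maps into $L$ and thereby defines a partial isometry, and then to extend that partial isometry to an automorphism $h\colon L\to L$ via Witt's extension theorem.

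Set $U := L\cap g^{-1}(L)$. Since $g$ is a bijection of $V$, the subspace $g^{-1}(L)$ has the same dimension as $L$, so the standard dimension inequality gives
\[
\dim U \;\geq\; 2\dim L - \dim V \;=\; \dim L - \dim S.
\]
For every $u\in U$ one has $g(u)\in L$ by construction, and since $g$ preserves the form on $V$ the restriction $g|_U\colon U\to L$ is form-preserving. Because $V=L\oplus S$ is an orthogonal decomposition of a non-singular form, the restriction of the form to $L$ is again non-singular (this is routine, and holds also for quadratic forms in characteristic $2$ with the appropriate definition).

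Witt's extension theorem, applied inside the non-singular form space $L$, then produces a form-preserving automorphism $h\colon L\to L$ that agrees with $g$ on $U$. Since $g|_L$ and $h$ coincide on $U$,
\[
\rk(g|_L - h) \;\leq\; \dim L - \dim U \;\leq\; \dim S \;\leq\; 3\dim S,
\]
which in fact gives a bound three times tighter than what is claimed; the stated factor $3$ is safely generous.

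The main obstacle will simply be invoking Witt's theorem uniformly across all the form types listed in the hypothesis -- symmetric bilinear, alternating, Hermitian, and quadratic (including characteristic $2$). These are all covered in the classical references \cite[Ch.~1]{carter} and \cite[Ch.~2]{KL}: the theorem extends any form-preserving linear injection between subspaces of a non-singular form space to a form-preserving automorphism of the whole space, with no regularity requirement on the source subspace. Our $L$ together with $g|_U$ satisfies the hypothesis by the previous paragraph, so the extension $h$ exists in every case of interest.
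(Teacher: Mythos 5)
Your strategy is the paper's own --- restrict $g$ to a large subspace of $L$ that $g$ maps back into $L$, then extend by Witt's theorem --- but you have omitted the one step that the published proof inserts precisely to make the Witt application safe, and your justification for skipping it is not correct in the stated generality. The subspace $U=L\cap g^{-1}(L)$ can be degenerate, and Witt's extension theorem ``with no regularity requirement on the source subspace'' is false for quadratic forms in characteristic $2$. Concretely, take $Q(x_1,x_2,x_3)=x_1x_2+x_3^2$ on $F^3$ with $\operatorname{char}F=2$: this $Q$ is non-singular, its polar form has radical $\langle e_3\rangle$, and the map $e_3\mapsto e_1+e_2$ is an isometry of lines ($Q$ equals $1$ on both generators, and the polar form vanishes on any line in characteristic $2$) that extends to no isometry of the whole space, since any global isometry must preserve the radical of the polar form. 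So a blanket citation of Witt for arbitrary subspaces does not cover all the form types allowed by the hypothesis. This is exactly why the paper first shrinks $L\cap g^{-1}(L)$ to a subspace $U$ with $V=U\oplus U^{\perp}$, losing up to another $2\dim S$ of dimension --- whence the constant $3$ rather than $1$: for a \emph{non-degenerate} source subspace, Witt's extension is unproblematic for every form type under consideration.

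Your argument can in fact be repaired, and would then improve the constant to $\dim S$, but what is needed is an extra verification rather than a stronger citation. In the defective characteristic-$2$ case the correct form of Witt's theorem requires the isometry to carry $U\cap\mathrm{rad}(L)$ into $\mathrm{rad}(L)$ (the radical of the polar form of $L$); this does hold in your setting, because $\mathrm{rad}(L)=L\cap\mathrm{rad}(V)$ for an orthogonal decomposition and $g$, being a global isometry, preserves $\mathrm{rad}(V)$ and maps $U$ into $L$. If you add that check, together with a reference to a version of Witt's theorem that states the radical condition explicitly, your proof goes through. As written, however, the appeal to Witt is a genuine gap, and it is the very point that the authors' detour through a non-degenerate subspace is designed to avoid.
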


\begin{proof} We recall that every subspace $W$ of $V$ contains a subspace $D$ with $V=D\oplus D^\perp$ and $\dim (W/D)\leq \dim (V/W)$. If $\dim W\leq\frac12\dim V$ this is clear; if not, then $W$ has a non-zero subspace $C$ on which the form is non-singular, and the assertion follows by induction on consideration of $C^\perp$ and its subspace $C^\perp\cap W$. 

Let $M=g^{-1}(L)$; thus $\dim(V/(L\cap M))\leq 2\dim S$, and there is a subspace $U\leq L\cap M$ with $V=U\oplus U^\perp$ and with $\dim(V/U)\leq 4\dim S$.  The map
$g|_U$ is a form-preserving map to a subspace of $L$; since the form restricted to $L$ is non-singular, by Witt's Lemma $g|_{U}$ can be extended to a form-preserving isomorphism $h\colon L\to L$. Thus $h|_U=g|_U$ and so $\rk(g|_L-h)\leq \dim (L/U)\leq 3\dim S$.
\end{proof}

\begin{lem}  For each $i\in\NN$ let $V_i$ be a vector space of finite dimension $n_i$ having a non-singular sesquilinear or quadratic form $f_i$, let $V_i$ be the orthogonal direct sum of subspaces $L_i$, $S_i$
and let $G_i'$, $H_i'$ be the derived groups of the groups $G_i$, $H_i$ of form-preserving maps
on $V_i$, $L_i$.  Suppose that the sequence $(\dim S_i)$ is bounded.  Then the maps $H_i\to G_i$ induce an isomorphism 
of the metric ultraproducts $H$, $G$ of the quotients $H'_i/\rZ(H'_i)$ and the quotients $G'_i/\rZ(G'_i)$.  \end{lem}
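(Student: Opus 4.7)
The plan is to exhibit a natural map $\varphi\colon H\to G$ induced by extension by the identity on $S_i$, and to show it is an isometric group isomorphism.

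For each $i$, the orthogonal decomposition $V_i=L_i\oplus S_i$ lets us extend a form-preserving $h\in H_i$ to $\varphi_i(h):=h\oplus 1_{S_i}\in G_i$; this is a form-preserving injective group homomorphism, and since passage from $h$ to $\varphi_i(h)$ leaves the discrete invariants (determinant, and where relevant spinor norm) unchanged, one has $\varphi_i(H_i')\subseteq G_i'$, and in fact $\varphi_i^{-1}(G_i')=H_i'$, uniformly across the three classical cases. The quantitative core is the block identity
$$\rk\bigl(\varphi_i(x)-\lambda\varphi_i(y)\bigr)=\rk(x-\lambda y)+[\lambda\neq 1]\cdot\dim S_i,\qquad x,y\in H_i',\ \lambda\in F_i^{\times}.$$
Since $\dim S_i=O(1)$ and $\dim L_i/n_i\to_\omega 1$, taking infima over $\lambda$ and normalising yields
$$\lim_{i\to\omega}\inf_\lambda\frac{\rk(\varphi_i(x_i)-\lambda\varphi_i(y_i))}{n_i}\;=\;\lim_{i\to\omega}\inf_\lambda\frac{\rk(x_i-\lambda y_i)}{\dim L_i}.$$
In particular $\varphi_i$ sends sequences at $d_H$-distance zero (including those differing by central elements of $H_i'$, whose extensions lie within projective rank $\dim S_i/n_i\to 0$ of the scalars of $G_i'$) to sequences at $d_G$-distance zero, so it descends to a well-defined isometric group homomorphism $\varphi\colon H\to G$. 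Being isometric, $\varphi$ is injective.

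For surjectivity I would invoke Lemma \ref{enough}. Given $[(g_i)]\in G$ with $g_i\in G_i'$, choose a form-preserving $h_i\colon L_i\to L_i$ with $g_i$ and $\varphi_i(h_i)$ agreeing on a subspace of codimension at most $4\dim S_i$; then $\rk(g_i-\varphi_i(h_i))$ is bounded independently of $i$, so the projective rank distance between them is $O(1/n_i)$. To land in $H_i'$, note that by an ultrafilter argument the coset $h_iH_i'$ may be taken of a fixed type across $i$, and we fix representatives of $H_i'$ in $H_i$ of rank bounded independently of $i$ from the identity (short products of reflections in the orthogonal case; the identity in the symplectic case and in the generic unitary case). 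Multiplying $h_i$ by the appropriate representative places it in $H_i'$ while changing $\varphi_i(h_i)$ only by a factor of bounded rank from the identity of $G_i$, so the $O(1/n_i)$ approximation is preserved and $\varphi([(h_i)])=[(g_i)]$.

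The main obstacle is conceptual rather than technical: a scalar on $L_i$ extended by the identity on $S_i$ is not scalar on $V_i$, so $\varphi_i$ does not literally descend to the group-level quotients by centres. The block-rank calculation above quantifies the defect as $\dim S_i/n_i\to_\omega 0$ and absorbs it in the metric ultraproduct, reducing the whole statement to the finite-rank approximation supplied by Lemma \ref{enough}.
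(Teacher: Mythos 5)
Your proposal is correct, and its core coincides with the paper's: both arguments extend elements of $H_i$ by the identity on $S_i$, both rest on Lemma \ref{enough} to approximate a given $g_i\in G_i'$ by such an extension up to rank $O(\dim S_i)$, and both absorb every bounded-rank discrepancy --- including the fact that a scalar on $L_i$ extended by $1_{S_i}$ is not scalar on $V_i$ --- into the vanishing normalized metric; your explicit block-rank treatment of the centre issue is in fact more careful than the paper's one-line assertion that the maps on central quotients are induced. The one genuine divergence is how the approximant is forced into the derived group: Lemma \ref{enough} only delivers $h_i\in H_i$, and you correct it by multiplying by a bounded-rank representative of the relevant coset of $H_i'$ in $H_i$ (products of at most two reflections in the orthogonal case; in the unitary case a rank-one diagonal matrix with determinant $(\det h_i)^{-1}$ --- not the identity, as your parenthesis suggests, since $\det h_i$ need not be $1$). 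The paper sidesteps this case analysis entirely: since $G$ is simple, it suffices to show that the image of $H$ contains every commutator of elements of $G$, and the commutator of the two approximants $h_i^{(1)},h_i^{(2)}\in H_i$ lies in $H_i'$ automatically. Your route costs a true but type-by-type verification that $H_i/H_i'$ admits coset representatives of bounded rank; the paper's commutator trick buys uniformity across all classical types at the price of invoking simplicity of $G$. Either way the statement follows, so this is a difference of technique in one step rather than a gap.
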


\begin{proof}  Suppose that $\dim S_i\leq \kappa$ for each $i$. We may regard $H_i$ as the subgroup of $G_i$ fixing $S_i$ pointwise.  We may also assume that all quotients $H_i/\rZ(H_i)$, $G_i/\rZ(G_i)$ are simple; thus the inclusion maps $H'_i\to G'_i$ induce maps $H'_i/\rZ(H'_i)\to G'_i/\rZ(G'_i)$ and induce a map $H\to G$ that is evidently injective.  To prove that it is surjective, it will suffice to prove that it contains the commutator of any pair $g^{(1)}, g^{(2)}$ of elements of the simple group $G$.

Let  the sequences $(g_i^{(1)})$, $(g_i^{(2)})$ be preimages in the product of the groups
$(G_i')$ of $g^{(1)}, g^{(2)}$.  Using Lemma \ref{enough}, for each
$i$ we construct $h_i^{(1)}, h_i^{(2)} \in \GL(L_i)$ preserving the form, with $$\rk(g^{(k)}|_{L_i}-h_i^{(k)})\leq 3\kappa\quad\hbox{for  } k=1,2.$$  It follows
that for all non-zero $\lambda_i^{(1)}, \lambda_i^{(2)}$ in the field we have $$|\rk(g_i^{(k)}-\lambda_i^{(k)} \id) -\rk(h_i^{(k)}-\lambda_i^{(k)}\id)|\leq 6\kappa\quad\hbox{for  } k=1,2,$$
and hence for $k=1,2$ the two sequences $(h_i^{(k)})$, $(g_i^{(k)})$ map to the same element of $G$. The result now follows easily.  
\end{proof}   

\begin{proof}[Proof of Proposition {\rm \ref{exhaust}.}]  The simple unitary and orthogonal groups are the quotients modulo the centre of derived groups of isometry groups of spaces with a non-singular unitary form and spaces with a non-singular quadratic form.  
Every finite-dimensional space with a non-singular unitary form has an orthonormal basis (see for example \cite[Proposition 2.3.1]{KL}); and so it can be written as an orthogonal direct sum $L\oplus S$ with $\dim L$ even and $\dim S\leq1$.  

Let $V$ be a finite space with a non-singular quadratic form $Q$ and associated bilinear form $\phi$.  Then $V$ has a basis of one of the following types:
\begin{enumerate}
\item[(i)] $\{e_1,\dots, e_m,f_1\dots, f_m\}$ with $Q(e_i)=Q(f_i)=0 $ and $\phi(e_i,f_j)=\delta_{ij}$ for all $i,j$;
\item[(ii)]  $\{e_1,\dots, e_m,f_1\dots, f_m,x,y\}$ with $Q(e_i)=Q(f_i)=0 $ and $\phi(e_i,f_j)=\delta_{ij}$
for all $i,j$ and with the subspace spanned by $x,y$ an orthogonal direct summand;
\item[(iii)]  $\{e_1,\dots, e_m,f_1\dots, f_m,x\}$ with $Q(e_i)=Q(f_i)=0 $ and $\phi(e_i,f_j)=\delta_{ij}$
for all $i,j$ and with the subspace spanned by $x$ an orthogonal direct summand. \end{enumerate}
(See \cite[Proposition 2.5.3]{KL}.)  Take $L$ to be the space spanned by the elements $e_i,f_j$ and
$S$ to be the space spanned by the remaining basis elements.  In each case we obtain an orthogonal decomposition $V=L\oplus S$ with $\dim S\leq2$ and with the form induced on $L$ having isometry group with derived group $\Omega^+_{2m}$.  Therefore the above lemmas can be applied and  
Proposition \ref{exhaust} follows. \end{proof}
 
 \section*{Acknowledgments}

The research leading to these results has received funding from the European Research Council under the European Union's Seventh Framework Programme -- ERC Starting Grant $n^{\circ}\ 277728$. This paper was written while the second author was Leibniz Professor at the University of Leipzig, and he thanks this university for its hospitality.

\end{document}